\newcommand{\Extend}[5]{\ext@arrow0099{\arrowfill@#1#2#3}{#4}{#5}}
\def\eps\epsilon
\def\cA{{\cal A}}
\def\C{\mathop{\bf C\kern 0pt}\nolimits}
\def\DD{\mathop{\bf D\kern 0pt}\nolimits}
\def\K{\mathop{\bf K\kern 0pt}\nolimits}
\def\N{\mathop{\bf N\kern 0pt}\nolimits}
\def\Q{\mathop{\bf Q\kern 0pt}\nolimits}
\newcommand{\beq}{\begin{equation}}
\newcommand{\eeq}{\end{equation}}
\newcommand{\ben}{\begin{eqnarray}}
\newcommand{\een}{\end{eqnarray}}
\newcommand{\beno}{\begin{eqnarray*}}
\newcommand{\eeno}{\end{eqnarray*}}
\def\R{\mathop{\mathbb R\kern 0pt}\nolimits}
\newtheorem{theorem}{Theorem}[section]
\newtheorem{proposition}[theorem]{Proposition}
\newtheorem{lemma}[theorem]{Lemma}
\newtheorem{corollary}[theorem]{Corollary}
\theoremstyle{remark}
\begin{document}

 \title[Inhomogeneous Biharmonic NLS]{ \bf Global Dynamics of the Non-Radial Energy-Critical Inhomogeneous Biharmonic NLS}

\author[C. M. Guzm\'an]{Carlos M. Guzm\'an}
\address{Department of Mathematics, UFF, Brazil}
\email{carlos.guz.j@gmail.com}
\author[S. Keraani]{Sahbi Keraani}
\address{Laboratoire Paul Painlevé UMR 8524, Université de Lille CNRS, 59655 Villeneuve d’Ascq Cedex, France}
\email{sahbi.keraani@univ-lille.fr}
\author[C. Xu]{Chengbin Xu}
\address{School of Mathematics and Statistics, Qinghai Normal University, Xining, Qinghai 810008, P.R.
China}
\email{xcbsph@163.com}

\begin{abstract}


We investigate the focusing inhomogeneous nonlinear biharmonic Schrödinger equation
\[
i\partial_t u + \Delta^2 u - |x|^{-b}|u|^p u = 0 \quad \text{on } \mathbb{R} \times \mathbb{R}^N,
\]
in the energy-critical regime, $p = \frac{8 - 2b}{N - 4}$, and $5 \leq N < 12$. We focus on the challenging non-radial setting and establish global well-posedness and scattering under the subcritical assumption $
\sup_{t \in I} \|\Delta u(t)\|_{L^2} < \|\Delta W\|_{L^2},
$
where $W$ denotes the ground state solution to the associated elliptic equation.

In contrast to previous results in the homogeneous case ($b = 0$), which often rely on radial symmetry and conserved quantities, our analysis is carried out without symmetry assumptions and under a non-conserved quantity, the kinetic energy. The presence of spatial inhomogeneity combined with the fourth-order dispersive operator introduces substantial analytical challenges. To overcome these difficulties, we develop a refined concentration-compactness and rigidity framework, based on the Kenig-Merle approach \cite{KM}, but more directly inspired by recent work of Murphy and the first author \cite{CM} in the second-order inhomogeneous setting.  

\

\noindent Mathematics Subject Classification. 35A01, 35QA55, 35P25.\quad\quad\quad

\end{abstract}
\keywords{Key words. Inhomogeneous biharmonic nonlinear Schr\"odinger equation; Global well-posedness; Scattering.}
\maketitle
\setcounter{section}{0}\setcounter{equation}{0}
\section{Introduction}

\noindent

 We consider the initial value problem (IVP) for the focusing inhomogeneous nonlinear biharmonic Schrödinger equation (IBNLS):
\begin{align}\label{IBNLS}
\begin{cases}
i\partial_t u + \Delta^2 u = |x|^{-b} |u|^p u, & t \in \mathbb{R},\ x \in \mathbb{R}^N, \\
u(0,x) = u_0(x) \in \dot{H}^2(\mathbb{R}^N),
\end{cases}
\end{align}
where \( u: \mathbb{R} \times \mathbb{R}^N \to \mathbb{C} \), \( p = \frac{8 - 2b}{N - 4} \), and \( 0 < b < \min\left\{ \frac{8 - N}{2}, \frac{8}{N} \right\} \). The power-type nonlinearity is modulated by a singular spatial weight \( |x|^{-b} \), which breaks translation invariance and introduces significant analytic difficulties.

Equation \eqref{IBNLS} is invariant under the scaling
\[
u_\lambda(t,x) := \lambda^{\frac{N-4}{2}} u(\lambda^4 t, \lambda x),
\]
and satisfies the criticality condition
\[
\|u_\lambda(0)\|_{\dot{H}^2} = \|u_0\|_{\dot{H}^2},
\]
so it is said to be energy-critical or \( \dot{H}^2(\mathbb{R}^N) \)-critical. The equation formally conserves the energy
\[
E(u) := \frac{1}{2} \int_{\mathbb{R}^N} |\Delta u|^2\, dx - \frac{1}{p+2} \int_{\mathbb{R}^N} |x|^{-b} |u|^{p+2}\, dx,
\]
whenever the solution is sufficiently regular.
\medskip

In the homogeneous case ($b = 0$), equation \eqref{IBNLS} reduces to the classical biharmonic nonlinear Schrödinger equation (BNLS), which has received considerable attention as a higher-order analogue of the classical nonlinear Schrödinger equation (NLS). The BNLS features fourth-order dispersion, introducing significant analytical difficulties, such as the lack of Galilean invariance, the failure of virial identities, and the nonlocal nature of the propagator. In the energy-critical setting, global well-posedness and scattering for radial initial data in $\dot{H}^2(\mathbb{R}^N)$ with energy and kinetic norms below those of the ground state were independently established in \cite{Pausader09} and \cite{Miao-Xu-Zhao09}. In contrast, the corresponding scattering problem in the inhomogeneous case, where the spatial weight modulates the non-linearity $|x|^{-b}$, remains largely unexplored, with no known results available even under radial symmetry assumptions. In this work, we initiate the study of this problem and establish global well-posedness and scattering for general initial data in the energy space, without any symmetry assumptions. The presence of the spatial weight introduces new technical challenges, but also provides localization properties that play a key role in our analysis.

\medskip
In the second-order case, the energy-critical inhomogeneous nonlinear Schrödinger equation (INLS) has received significant attention. Murphy and the first author~\cite{GM} extended the concentration-compactness and rigidity method of Kenig–Merle~\cite{KM} to treat the non-radial INLS. Later, Guzmán and Xu~\cite{GXu2024} further developed this analysis in higher dimensions. Motivated by these results, we study the energy-critical inhomogeneous biharmonic nonlinear Schrödinger equation without assuming radial symmetry. The combination of the singular spatial weight and the fourth-order dispersion introduces substantial analytical difficulties, particularly in the absence of conserved quantities.

\medskip
It is worth mentioning that the Cauchy problem \eqref{IBNLS} was first studied by Guzmán–Pastor \cite{GP2020}, who established well-posedness in the subcritical case $\dot{H}^2$ and the intercritical regime, that is, $\frac{8 - 2b}{N} < p < \frac{8 - 2b}{N - 4}$. Improved results, extending the admissible ranges for both $p$ and $b$, were subsequently obtained by the same authors \cite{GP2022}. Liu and Zhang made further contributions \cite{LZ2021}, who employed Besov space techniques to refine the well-posedness theory. Scattering in the intercritical case was established by Campos–Guzmán \cite{GC2022} for $N \geq 5$, and more recently, the low-dimensional setting was addressed by Dinh–Keraani \cite{DK}. Both works adopted a streamlined approach introduced by Murphy \cite{Mu} for the second-order equation, avoiding the Kenig–Merle concentration-compactness and rigidity framework. The energy-critical case was also studied by Guzmán–Pastor \cite{GP2022}, who proved local well-posedness in $\dot{H}^2$ under certain restrictions on $b$. In this paper, we first revisit this well-posedness result, extending the admissible range for $b$.

\begin{proposition}\label{WP}
Assume $5\leq N \leq 11$, $p = \frac{8 - 2b}{N - 2}$, and $0 < b \leq \min\left\{\frac{8}{N-2}, \frac{12 - N}{2}\right\}$. Then the following holds:
\begin{itemize}
    \item[(i)] For any initial data $u_0 \in \dot{H}^2$, there exists $T = T(u_0) > 0$ and a unique solution $u$ to \eqref{IBNLS} with initial data $u(0) = u_0$, satisfying $ u \in L^q_{\mathrm{loc}}\big( (-T, T); \dot{H}^{2,r} \big)$, $\forall$ $(q,r)$ B-admissible.

    \item[(ii)] In particular, there exists $\delta > 0$ such that if\footnote{The norms used in this result are defined in Section $2$.}
    \[
    \|e^{it\Delta^2} u_0\|_{B([0, \infty))} + \|e^{it\Delta^2} u_0\|_{W([0, \infty))} < \delta,
    \]
    then the solution $u$ to \eqref{IBNLS} is global forward in time and satisfies $\|u\|_{B([0, \infty))} \leq 2\delta$.

 The analogous statement holds backward in the entire real line $\mathbb{R}$.
\end{itemize}
\end{proposition}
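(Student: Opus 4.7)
The plan is to set up a contraction mapping argument for the Duhamel map
$$\Phi(u)(t) = e^{it\Delta^2}u_0 - i\int_0^t e^{i(t-s)\Delta^2}\bigl(|x|^{-b}|u|^p u\bigr)\,ds$$
on a complete metric space built from the Strichartz norms at the $\dot H^2$ level. Working on a time interval $I \ni 0$, I would take the solution space
$$X(I) := \bigl\{u:\ \|u\|_{B(I)} + \|u\|_{W(I)} + \|\Delta u\|_{S(I)} < \infty\bigr\},$$
where $B(I)$ and $W(I)$ are the spacetime norms associated with B-admissible pairs introduced in Section 2 and $S(I)$ is the natural auxiliary Strichartz norm at $L^2$ regularity. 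The linear Strichartz estimates for $e^{it\Delta^2}$ together with their inhomogeneous counterpart reduce the whole problem to one nonlinear estimate in the dual Strichartz space.

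The heart of the argument, and the main obstacle, is the energy-level nonlinear estimate
$$\bigl\|\Delta\bigl(|x|^{-b}|u|^p u\bigr)\bigr\|_{N(I)} \lesssim \|u\|_{X(I)}^{p+1},$$
where $N(I)$ denotes the dual Strichartz space pairing with $W(I)$. The difficulty is that two derivatives must be absorbed on a product already involving the singular weight $|x|^{-b}$ and the nonlinearity $|u|^p u$; after application of the Leibniz rule this produces dangerous terms behaving like $|x|^{-b-2}|u|^{p+1}$, $|x|^{-b-1}|u|^p|\nabla u|$, and $|x|^{-b}|u|^{p-1}|\nabla u|^2$. To handle them I would split $\R^N = \{|x|\leq 1\}\cup\{|x|>1\}$. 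On the exterior region the weight is bounded, and the estimate reduces to the energy-critical fractional Leibniz/chain rule for $|u|^p u$ used in the homogeneous biharmonic case. On the interior region the Hardy–Sobolev inequality $\||x|^{-\sigma}f\|_{L^r} \lesssim \||\nabla|^\sigma f\|_{L^r}$ (valid for $0 < \sigma < N/r$) is used to trade each inverse power of $|x|$ for a fractional derivative on $u$; closing the estimate through the Sobolev embedding $\dot H^2 \hookrightarrow L^{2N/(N-4)}$ is precisely what dictates the admissible range $0 < b \leq \min\{8/(N-2), (12-N)/2\}$ and the dimensional window $5 \leq N \leq 11$.

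Once the nonlinear estimate is in hand, part (i) follows by a standard short-time fixed-point argument: on a small interval $[0,T]$ the free evolution $e^{it\Delta^2}u_0$ has arbitrarily small $B([0,T]) + W([0,T])$ norm by density of Schwartz data and monotone convergence, so $\Phi$ contracts a small ball of $X([0,T])$ and the fixed point yields the unique local solution. Uniqueness in the full Strichartz class follows by a standard difference estimate. For part (ii), applied directly on $[0,\infty)$, the smallness assumption controls the linear part of $\Phi$ by $\delta$; choosing a contraction ball of radius $2\delta$ in $X([0,\infty))$ and $\delta$ sufficiently small in the nonlinear estimate closes the bootstrap and produces a global forward solution with $\|u\|_{B([0,\infty))} \leq 2\delta$. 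The backward statement is obtained by applying the same argument after the time reversal $u(t,x) \mapsto \overline{u(-t,x)}$, which preserves the equation up to complex conjugation of the data.
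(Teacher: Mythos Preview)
Your overall skeleton (Duhamel map, Strichartz, contraction on a ball) is correct and matches the paper, but the proposed nonlinear estimate contains a genuine gap that would make the argument fail in the energy-critical regime.

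The first issue is that you place \emph{two} derivatives on the nonlinearity. The paper never estimates $\Delta\bigl(|x|^{-b}|u|^{p}u\bigr)$; instead it uses the derivative-gaining inhomogeneous Strichartz estimate
\[
\Bigl\|\Delta\int_0^t e^{i(t-s)\Delta^2}g(s)\,ds\Bigr\|_{L_t^qL_x^r}\lesssim \|\nabla g\|_{L_t^2L_x^{2N/(N+2)}},
\]
so that only $\nabla F(x,u)$ has to be controlled. This eliminates the $|x|^{-b}|u|^{p-1}|\nabla u|^{2}$-type terms you list and is what makes the subsequent Hardy estimate close.

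The second, more serious, issue is the proposed split $\{|x|\le 1\}\cup\{|x|>1\}$. On the exterior piece you bound $|x|^{-b}\le 1$ and invoke ``the energy-critical fractional Leibniz/chain rule for $|u|^{p}u$ used in the homogeneous biharmonic case''. But with $p=\frac{8-2b}{N-4}$ the nonlinearity $|u|^{p}u$ is \emph{energy-subcritical} for the homogeneous problem; any estimate you obtain there will carry a positive power of $|I|$ (this is exactly why the $|x|\le 1$/$|x|>1$ split is used for the subcritical IBNLS). That is harmless for part~(i), but it kills part~(ii), where $I=[0,\infty)$ and no time factor is available. In an energy-critical problem every nonlinear estimate must be scale-invariant, and a fixed-radius split cannot be.

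The paper avoids both problems by appealing to the scale-invariant estimate (Lemma~\ref{Non-e1})
\[
\bigl\||x|^{-b}|f|^{p}\,Tg\bigr\|_{L_t^2L_x^{2N/(N+2)}}\lesssim \|\Delta f\|_{W(I)}\,\|f\|_{B(I)}^{\,p-b}\,\|\Delta g\|_{W(I)}^{\,b+1},\qquad T\in\{\nabla,\ |x|^{-1}\},
\]
which handles the singular weight on all of $\R^N$ via Hardy's inequality without any spatial splitting. This is also where the restrictions on $b$ actually come from: the exponent $p-b$ in the estimate forces $p\ge b$, i.e.\ $b\le \frac{8}{N-2}$, and the difference estimate for the contraction (which uses \eqref{SECONDEI}) requires $p\ge 1$, i.e.\ $b\le \frac{12-N}{2}$. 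Your attribution of these thresholds to the Sobolev embedding alone is too vague.
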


The conditions on the parameter \( b \) arise from the constraints \( p \geq b \) and \( p \geq 1 \); see Section~2.

We now state our main result: global well-posedness and scattering for the IVP \eqref{IBNLS} with non-radial initial data.

\begin{theorem}\label{main-T}
Let \(5 \leq N \leq 11\), \(0 < b <\min\{\frac{8}{N-2}, \frac{12 - N}{2}\}\) and \(p = \frac{8 - 2b}{N - 4}\). Suppose that \(u(t)\) is a solution to \eqref{IBNLS} with initial data \(u_0 \in \dot{H}^2(\mathbb{R}^N)\), defined on its maximal lifespan \(I\), and satisfies
\begin{equation}\label{kinetic-condition}
\sup_{t \in I} \|\Delta u(t)\|_{L^2} < \|\Delta W\|_{L^2},
\end{equation}
where $W$ denotes the ground-state solution to  \[
\Delta^2 W - |x|^{-b} |W|^p W = 0.
\]
Then \(u(t)\) exists globally in time and scatters both forward and backward in time in \(\dot{H}^2(\mathbb{R}^N)\).
\end{theorem}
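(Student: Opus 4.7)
The plan is to adapt the Kenig--Merle concentration-compactness and rigidity scheme \cite{KM} to our fourth-order inhomogeneous setting, in the spirit of the second-order analysis of Murphy and the first author in \cite{GM,CM} and of \cite{GXu2024}. A key conceptual point, emphasized already in the abstract, is that the hypothesis \eqref{kinetic-condition} controls the \emph{non-conserved} kinetic norm $\|\Delta u\|_{L^2}$, so the whole argument must be carried out directly at the $\dot{H}^2$ level, without reducing to a monotone conservation law.

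\textbf{Stage 1 (variational estimates).} Using the sharp Sobolev-type inequality
\[
\int_{\mathbb{R}^N}|x|^{-b}|f|^{p+2}\,dx \;\leq\; C_{\mathrm{sh}}\|\Delta f\|_{L^2}^{p+2},
\]
with equality on the ground state $W$, I would establish a coercivity statement: under \eqref{kinetic-condition} the quantity $\|\Delta u(t)\|_{L^2}^2 - \int|x|^{-b}|u(t)|^{p+2}\,dx$ is comparable to $\|\Delta u(t)\|_{L^2}^2$, with a strict gap depending on $\|\Delta W\|_{L^2}^2 - \sup_{t}\|\Delta u\|_{L^2}^2$. Together with the conservation of $E(u)$, this gives quantitative a priori control along the flow and identifies the below-ground-state region as stable under the evolution.

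\textbf{Stage 2 (profile decomposition and minimal non-scattering solution).} The next step is a Bahouri--Gérard-type linear profile decomposition for the propagator $e^{it\Delta^2}$ in $\dot{H}^2$, in the spirit of \cite{Pausader09}. Setting
\[
\Lambda_c := \sup\bigl\{\Lambda : \text{scattering holds whenever } \sup_{t\in I}\|\Delta u(t)\|_{L^2}^2 \leq \Lambda\bigr\},
\]
I would argue by contradiction, assuming $\Lambda_c < \|\Delta W\|_{L^2}^2$ and extracting a sequence of solutions whose kinetic norms converge to $\Lambda_c$ while their Strichartz (scattering) norms diverge. The inhomogeneity $|x|^{-b}$ breaks translation invariance, so profiles whose spatial centers $x_n$ drift to infinity effectively decouple from the nonlinearity and are driven by the \emph{homogeneous} BNLS, for which scattering below the ground state is available by \cite{Pausader09,Miao-Xu-Zhao09}; profiles that remain in a bounded region are treated by the small-data theory of Proposition~\ref{WP} combined with Stage~1. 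A Pythagorean expansion of the kinetic norm, together with the stability theory coming from Proposition~\ref{WP}, then forces a single profile to survive, yielding a minimal critical element $u_c$ whose $\dot{H}^2$-orbit is pre-compact. Since the equation is not translation invariant, no spatial modulation is required and the compactness parameters reduce to a scale function $N(t)$.

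\textbf{Stage 3 (rigidity).} This is the step I expect to be the main obstacle. The idea is to apply a localized virial identity of the form $\tfrac{d^2}{dt^2}\int \phi_R(x)|u_c(t)|^2\,dx$ and exploit the compactness of the orbit to control the tails. Two features make this delicate for IBNLS: the virial expansion produces $\Delta^2\phi_R$, $\nabla\Delta\phi_R$ and mixed Hessian contributions whose sign and size must be managed through a careful choice of $\phi_R$ (typically a smooth truncation of $|x|^2$); and the weight $|x|^{-b}$, while contributing a favorable Pohozaev-type term $\sim (N-b)\int|x|^{-b}|u_c|^{p+2}\,dx$ near the origin, still produces error contributions once truncated. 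Absorbing these errors will require the coercivity gap from Stage~1 together with decay estimates on the pre-compact orbit. Carrying out this virial analysis to show that $N(t)$ cannot escape to infinity (ruling out finite-time concentration) and that any remaining solitonic critical element must vanish, thereby contradicting its defining non-scattering property, closes the argument.
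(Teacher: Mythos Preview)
Your overall architecture (variational coercivity $\Rightarrow$ profile decomposition and extraction of a minimal critical element $\Rightarrow$ rigidity) matches the paper, but Stage~2 contains a genuine gap. You propose to treat profiles whose centers $x_n$ escape to infinity by saying they are ``driven by the homogeneous BNLS, for which scattering below the ground state is available by \cite{Pausader09,Miao-Xu-Zhao09}.'' This does not work: those references establish energy-critical BNLS scattering only for \emph{radial} data, and no non-radial result of that type is available. Since the whole point of Theorem~\ref{main-T} is the non-radial setting, you cannot invoke a homogeneous large-data theory for the drifting profiles. The paper avoids this obstruction entirely: rather than comparing to the homogeneous BNLS, Proposition~\ref{N-profile} shows directly that when $|x_n|/\lambda_n\to\infty$ the \emph{linear} evolution $\chi_n P_n e^{it\Delta^2}\phi$ (with suitable spatial and frequency cutoffs) is already an approximate solution to \eqref{IBNLS}, because the weight $|x|^{-b}$ contributes a factor $\sim |x_n/\lambda_n|^{-b}\to 0$ on the support of the profile. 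Stability (Proposition~\ref{stability}) then upgrades this to a genuine scattering solution of the inhomogeneous equation. This is the mechanism by which the decay of $|x|^{-b}$ substitutes for the missing non-radial homogeneous theory, and your proposal is missing it.

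Two further remarks on Stage~3. First, the quantity $\tfrac{d^2}{dt^2}\int \phi_R|u_c|^2\,dx$ is not available for $\dot{H}^2$ data; the paper works instead with the first-order Morawetz action $M_a(t)=-\mathrm{Im}\int \bar u\,\nabla u\cdot\nabla a\,dx$ and computes a single time derivative. Second, the finite-time blow-up scenario is not excluded through the virial at all: the paper uses the reduced Duhamel formula (Lemma~\ref{duhamel}) together with Bernstein to show that a compact solution with $T_{\max}<\infty$ would have zero mass, hence vanish. Your outline should separate the cases $T_{\max}<\infty$ and $T_{\max}=\infty$ and supply the Duhamel argument for the former.
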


\medskip

The proof of Theorem \ref{main-T} follows the general strategy pioneered by Kenig and Merle \cite{KM}, the so-called Kenig--Merle roadmap, with substantial adaptations to address the challenges posed by the non-radial setting and the lack of kinetic energy conservation. In addition to standard tools such as small-data global theory and perturbative stability, our approach hinges on two essential analytical ingredients. The first is a spatial localization result (Proposition \ref{N-profile}), which compensates for the absence of radial symmetry by ensuring that the initial data remains sufficiently separated from the origin. The second and most critical is a Palais--Smale-type condition (Proposition \ref{PS}), which enables the construction of a minimal blow-up solution, the so-called critical solution (Proposition \ref{CS}), with the lowest possible kinetic energy among all non-scattering solutions. The identification of this critical object is facilitated by the extraction of a bad profile and the decoupling of kinetic energy.

Adopting this framework, we proceed by contradiction and introduce the critical kinetic energy threshold
\[
K_c = \inf \left\{ K > 0 : L(K) = \infty \right\},
\]
where
\[
L(K) = \sup \left\{ \| u \|_{S(I)} : u \text{ solves \eqref{IBNLS} with } \sup_{t \in I} \| \Delta u(t) \|_{L^2} \leq K \right\}.
\]
The small-data theory ensures that $L(K) < \infty$ for sufficiently small $K$, and continuity of $L(K)$ follows from standard stability arguments. Thus, Theorem \ref{main-T} is equivalent to proving that
\[
K_c = \|\Delta W\|_{L^2},
\]
where $W$ is the ground state solution of the associated elliptic equation. Assuming, for contradiction, that $K_c < \|\Delta W\|_{L^2}$, we construct a sequence of blow-up solutions $\{u_n\}$ whose kinetic energies converge to $K_c$. A linear profile decomposition in $\dot{H}^2$ expresses $u_n(0)$ as a sum of orthogonal linear profiles plus a vanishing remainder. Each profile gives rise to a nonlinear evolution via Proposition \ref{N-profile}. The perturbation theory then yields the existence of a bad profile (Lemma \ref{bp}), a nonlinear profile responsible for the blow-up and lack of scattering. This profile leads to the construction of a global, non-scattering, and almost-periodic solution (in the sense of precompactness modulo scaling): the critical solution $u_c$ (Proposition \ref{CS}). Finally, a localized virial-type argument combined with conservation laws rules out the existence of such a solution. This contradiction completes the proof of Theorem \ref{main-T}, thereby establishing the scattering result for \eqref{IBNLS}.

As a consequence of our main result, we obtain a scattering criterion under sharp thresholds for the inhomogeneous biharmonic nonlinear Schr\"odinger equation. Specifically, if the initial data has energy and kinetic energy strictly below those of the ground state \( $W$ \), then the corresponding solution is global and scatters in \( \dot{H}^2 \).

\begin{corollary}\label{cor:scattering}
Let \( 5 \leq N \leq 11 \), \( 0 < b < \min\{\frac{8}{N-2} \frac{12 - N}{2}\} \), and $p = \frac{8 - 2b}{N - 4}$. Assume the initial data \( u_0 \in \dot{H}^2 \) satisfies
\begin{equation}\label{standard conditions}
 E[u_0] < E[W] \quad \text{and} \quad \| \Delta u_0 \|_{L^2} < \| \Delta W \|_{L^2},
\end{equation}
where \( $W$ \) is the ground state. Then the corresponding solution \( u \) to \eqref{IBNLS} exists globally in time and scatters in \( \dot{H}^2 \).
\end{corollary}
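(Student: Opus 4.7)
The plan is to show that the two inequalities in \eqref{standard conditions} propagate in time under the flow, which reduces Corollary \ref{cor:scattering} to Theorem \ref{main-T}. The key ingredient is a coercivity argument based on the sharp inhomogeneous Sobolev inequality attained by the ground state $W$.

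First, I would invoke the variational characterization of $W$. From the elliptic equation $\Delta^2 W = |x|^{-b}|W|^p W$, multiplying by $W$ and integrating by parts gives the Pohozaev-type identity $\|\Delta W\|_{L^2}^2 = \int |x|^{-b}|W|^{p+2}\,dx$. Together with the energy-critical relation $p = \frac{8-2b}{N-4}$, this identifies $W$ as an optimizer of the sharp inhomogeneous Sobolev inequality
\[
\int_{\mathbb{R}^N} |x|^{-b}|u|^{p+2}\,dx \;\leq\; C_{\mathrm{GN}}\,\|\Delta u\|_{L^2}^{p+2}, \qquad C_{\mathrm{GN}}=\|\Delta W\|_{L^2}^{-p},
\]
and yields the formula $E[W]=\frac{p}{2(p+2)}\|\Delta W\|_{L^2}^2$.

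Next, I would define the one-variable function $f(y)=\tfrac12 y^2-\tfrac{C_{\mathrm{GN}}}{p+2}y^{p+2}$ and observe that $f$ is strictly increasing on $[0,\|\Delta W\|_{L^2}]$, attains its maximum $E[W]$ precisely at $y=\|\Delta W\|_{L^2}$, and is strictly decreasing beyond that point. By the sharp Sobolev inequality we have the pointwise bound $E[u(t)]\geq f(\|\Delta u(t)\|_{L^2})$ for every $t$ in the maximal lifespan $I$. Since the energy is conserved and $E[u_0]<E[W]$ by hypothesis, we obtain
\[
f(\|\Delta u(t)\|_{L^2}) \leq E[u_0] < E[W] = f(\|\Delta W\|_{L^2}).
\]
Combining this with the initial-time assumption $\|\Delta u_0\|_{L^2}<\|\Delta W\|_{L^2}$ and a standard continuity-in-time argument (the map $t\mapsto\|\Delta u(t)\|_{L^2}$ is continuous on $I$, and cannot cross the level $\|\Delta W\|_{L^2}$ without first achieving $f(\|\Delta u(t)\|_{L^2})=E[W]$, contradicting the strict inequality above), I conclude
\[
\sup_{t\in I}\|\Delta u(t)\|_{L^2} \;\leq\; \eta \;<\; \|\Delta W\|_{L^2}
\]
for some $\eta$ depending only on $E[u_0]$ and $\|\Delta u_0\|_{L^2}$. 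Hypothesis \eqref{kinetic-condition} of Theorem \ref{main-T} is therefore satisfied, and that theorem yields global existence and scattering in both time directions.

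The main potential obstacle is justifying the sharp inhomogeneous Sobolev inequality with optimizer $W$ together with the Pohozaev identity in the current inhomogeneous biharmonic setting; once that variational information is in hand (and in this paper it is part of the ground-state machinery developed for the main theorem), the corollary is a direct continuity-and-conservation argument. A secondary technical remark is that energy conservation must be used along the whole maximal lifespan; this is standard for $\dot{H}^2$-solutions constructed by Proposition \ref{WP} via approximation and the continuity of $E$ on $\dot H^2$.
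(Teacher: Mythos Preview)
Your proposal is correct and follows exactly the paper's approach: the paper proves the corollary by invoking Lemma~\ref{energy-tra} (energy trapping), whose proof is precisely the sharp-Sobolev plus one-variable coercivity/continuity argument you wrote out, and then applying Theorem~\ref{main-T}. The only cosmetic difference is that the paper packages the trapping step into a separate lemma rather than inlining it.
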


The conclusion follows by combining Lemma~\ref{energy-tra}, which provides uniform control of the \( \dot{H}^2 \)-norm strictly below \( \| W \|_{\dot{H}^2} \), with Theorem~\ref{main-T}, which ensures global existence and scattering.


We remark that the arguments developed in this paper for the focusing case can be readily adapted to the defocusing inhomogeneous biharmonic NLS. In this setting, the energy always controls the kinetic energy, and no threshold conditions are required. Consequently, one obtains the following global well-posedness and scattering result.

\begin{corollary}\label{cor:defocusing}
Let \( 5\leq N \leq 11 \) and $0 < b \leq \min\left\{ \frac{8}{N - 2}, \frac{12 - N}{2} \right\}$, and $p = \frac{8 - 2b}{N - 4}$.
Consider the defocusing inhomogeneous biharmonic NLS:
\[
i\partial_t u + \Delta^2 u + |x|^{-b} |u|^p u = 0, \quad u(0,x) = u_0 \in \dot{H}^2(\mathbb{R}^N).
\]
Then, for any initial data \( u_0 \in \dot{H}^2 \), there exists a unique global solution \( u  \), which scatters in \( \dot{H}^2 \).
\end{corollary}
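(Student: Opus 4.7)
The key structural observation is that in the defocusing setting the energy
\[
E(u) = \tfrac{1}{2}\int|\Delta u|^2\,dx + \tfrac{1}{p+2}\int|x|^{-b}|u|^{p+2}\,dx
\]
is a sum of two nonnegative contributions, so conservation of $E$ yields the universal a priori bound $\sup_{t\in I}\|\Delta u(t)\|_{L^2}^2 \le 2E(u_0)$ with no threshold assumption required. The plan is therefore to re-run the proof of Theorem~\ref{main-T} with this unconditional bound playing the role of the subcritical kinetic hypothesis~\eqref{kinetic-condition}.

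First I would note that the local and small-data theory of Proposition~\ref{WP} and the perturbative stability estimates underlying Theorem~\ref{main-T} are all sign-independent in the nonlinearity, so they transfer verbatim to the defocusing equation. Next I would redefine the critical threshold in terms of energy, setting
\[
E_c = \sup\bigl\{E_0\ge 0 : \text{every solution with } E(u_0)\le E_0 \text{ scatters in } \dot H^2\bigr\},
\]
and argue by contradiction assuming $E_c<\infty$. Along a sequence of data with $E(u_{0,n})\nearrow E_c$ whose solutions fail to scatter, I would apply the $\dot H^2$ linear profile decomposition together with the nonlinear profile construction (Proposition~\ref{N-profile}), the Palais--Smale condition (Proposition~\ref{PS}), and the bad-profile extraction (Lemma~\ref{bp}) exactly as in the focusing case. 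Energy decoupling along orthogonal profiles is immediate here since both halves of the defocusing energy are nonnegative and carry no cancellation; this yields a minimal, almost-periodic-modulo-scaling critical solution $u_c$ with $E(u_c)=E_c$.

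The rigidity step is where the defocusing sign becomes a genuine advantage. In the focusing setting one must delicately balance a sign-indefinite nonlinear term against the kinetic energy via a localized virial identity; in the defocusing setting a localized Morawetz-type identity adapted to the fourth-order dispersion produces
\[
\int_0^\infty\!\!\int_{\mathbb{R}^N} |x|^{-b}|u_c|^{p+2}\,dx\,dt < \infty,
\]
with a favorable sign, and together with the almost-periodicity of $u_c$ this forces $u_c\equiv 0$, contradicting $E(u_c)=E_c>0$.

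The hard part will be verifying this localized Morawetz estimate with the correct sign after accounting for the singular weight $|x|^{-b}$ and the commutators generated by $\Delta^2$ against the spatial cutoffs. Fortunately, $|x|^{-b}$ is positive and confines the nonlinear contribution near the origin, which is consistent with the Morawetz philosophy; the remaining work is to track the lower-order error terms introduced by the biharmonic structure, analogously to what was done for the homogeneous BNLS in \cite{Pausader09, Miao-Xu-Zhao09}. Everything else --- compactness, profile decomposition, stability --- amounts to a straightforward (indeed, simpler) transcription of the focusing arguments already developed in this paper.
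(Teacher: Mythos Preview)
Your overall strategy is correct and matches the paper's (implicit) approach: the paper simply remarks that the focusing machinery transfers verbatim once one observes that in the defocusing case the energy automatically controls the kinetic energy, so no threshold is needed. The concentration--compactness portion of your proposal (profile decomposition, Proposition~\ref{N-profile}, Palais--Smale, bad profile, construction of $u_c$) is exactly what the paper intends; using $E_c$ rather than $K_c$ as the induction parameter is an equivalent reformulation.

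Where you diverge is in the rigidity step. You propose establishing a genuine Morawetz estimate $\int_0^\infty\!\int |x|^{-b}|u_c|^{p+2}\,dx\,dt<\infty$ and then combining it with almost-periodicity to force $u_c\equiv 0$. This is a valid and classical route for defocusing problems, but it is more work than necessary here and, as you acknowledge, requires tracking the sign and size of the biharmonic commutator terms in a weight different from $|x|^2$. The paper's own virial argument (Section~4, Claim~2) adapts more directly: with the defocusing sign the localized $|x|^2$-virial identity becomes
\[
\frac{d}{dt}M_a(t) = 8\int_{\R^N}\bigl(|\Delta u|^2 + |x|^{-b}|u|^{p+2}\bigr)dx + \mathcal{O}(\epsilon),
\]
so the main term is trivially coercive (no need for Lemma~\ref{L:coercive} or any ground-state comparison). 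Integrating on $[0,T]$, using $|M_a|\lesssim R^4 E(u_c)$ and the tightness lemma exactly as written in the paper, one obtains $E(u_c)\lesssim \tfrac{R^4}{T}E(u_c)+\epsilon$, hence $E(u_c)=0$ and $u_c\equiv 0$. This is both shorter and avoids the Morawetz bookkeeping you flagged as the hard part.
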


We conclude the introduction by giving some notations which
will be used throughout this paper. We always use $X\lesssim Y$ to denote $X\leq CY$ for some constant $C>0$.
Similarly, $X\lesssim_{u} Y$ indicates there exists a constant $C:=C(u)$ depending on $u$ such that $X\leq C(u)Y$.
We also use the big-oh notation $\mathcal{O}$. e.g. $A=\mathcal{O}(B)$ indicates $A\leq CB$ for constant $C>0$.
The derivative operator $\nabla$ refers to the spatial  variable only.
We use $L^r(\mathbb{R}^N)$ to denote the Banach space of functions $f:\mathbb{R}^N\rightarrow\mathbb{C}$ whose norm
$$\|f\|_r:=\|f\|_{L^r}=\Big(\int_{\mathbb{R}^N}|f(x)|^r dx\Big)^{\frac1r}$$
is finite, with the usual modifications when $r=\infty$. For any non-negative integer $k$,
we denote by $H^{k,r}(\mathbb{R}^N)$ the Sobolev space defined as the closure of smooth compactly supported functions in the norm $\|f\|_{H^{k,r}}=\sum_{|\alpha|\leq k}\|\frac{\partial^{\alpha}f}{\partial x^{\alpha}}\|_r$, and we denote it by $H^k$ when $r=2$.
For a time slab $I$, we use $L_t^q(I;L_x^r(\mathbb{R}^N))$ to denote the space-time norm
\begin{align*}
  \|f\|_{L_{t}^qL^r_x(I\times \R^N)}=\bigg(\int_{I}\|f(t,x)\|_{L^r_x}^q dt\bigg)^\frac{1}{q}
\end{align*}
with the usual modifications when $q$ or $r$ is infinite, sometimes we use $\|f\|_{L^q(I;L^r)}$ or $\|f\|_{L^qL^r(I\times\mathbb{R}^N)}$ for short.

The remainder of this paper is structured as follows. In Section~2, we introduce the notation and recall the local well-posedness and stability theory for \eqref{IBNLS}, together with some variational properties associated with the ground state and the virial identity. In Section~3, we construct the minimal blow-up solution (Proposition~\ref{CS}), introducing the key ingredients required in our analysis. In Section~4, we complete the proof of the main result (Theorem~\ref{main-T}).

\section{Preliminaries}

\noindent

Let us start this section by introducing the notation used throughout the paper. We recall some Strichartz estimates associated to the linear biharmonic Schr\"odinger propagator.

 We define the set $\mathcal{B}_0$:
$$\mathcal{B}_0:=\left\{(q,r): \frac{4}{q}=\frac{N}2-\frac{N}r,\ 2\leq r<\frac{2N}{N-4}\right\}.$$


We also use the following estimates: if
 $F(x,z)=|x|^{-b}|z|^p z$,   then (see details in \cite[Remark 2.6]{Guzman} and \cite[Remark 2.5]{FG})
\begin{equation}\label{FEI}
 |F(x,z)-F(x,w)|\lesssim |x|^{-b}\left( |z|^p+ |w|^p \right)|z-w|
\end{equation}
and
\begin{equation}\label{SECONDEI}
\left|\nabla \left(F(x,z)-F(x,w)\right)\right|\lesssim  |x|^{-b-1}(|z|^{p}+|w|^{p})|z-w|+|x|^{-b}|z|^p|\nabla (z- w)|+E,
\end{equation}
where
\begin{eqnarray*}
 E &\lesssim& \left\{\begin{array}{cl}
 |x|^{-b}\left(|z|^{p-1}+|w|^{p-1}\right)|\nabla w||z-w| & \textnormal{if}\;\;\;p > 1 \vspace{0.2cm} \\
|x|^{-b}|\nabla w||z-w|^{p} & \textnormal{if}\;\;\;0<p\leq 1.
\end{array}\right.
\end{eqnarray*}

 Now, let us recall some results on Strichartz estimates.
\begin{lemma}
  Let $0\in I$, for any $(q,r),(\tilde{q},\tilde{r})\in \mathcal{B}_0$, the following statement hold

  (i)(linear estimate,\cite{Guo,Pau})
  $$\|e^{it\Delta^2}f\|_{L_t^qL_x^r(I\times\R^N)}\leq C\|f\|_{L^2};$$

   (ii)(nonlinear estimate,\cite{Guo,Pau})
  $$\left\|\int_{0}^{t}e^{i(t-s)\Delta^2}g(\cdot,s)ds\right\|_{L_t^qL_x^r(I\times\R^N)}\leq C\|g\|_{L_t^{\tilde{q}'}L_x^{\tilde{r}'}(I\times\R^N)};$$

  (iii) (nonlinear estimate,\cite{GP})
$$\left\|\Delta\int_{0}^{t}e^{i(t-s)\Delta^2}g(\cdot,s)ds\right\|_{L_t^qL_x^r(I\times\R^N)}\leq C\|\nabla g\|_{L_t^{2}L_x^{\frac{2N}{N+2}}(I\times\R^N)}.$$
\end{lemma}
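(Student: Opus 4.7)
The plan is to develop all three estimates from the basic dispersive bound for the fourth-order propagator via the Keel--Tao/Christ--Kiselev machinery, treating (iii) separately as a gain-of-regularity refinement.

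For (i), I would start from the pointwise kernel bound $\|e^{it\Delta^2}\|_{L^1\to L^\infty}\lesssim |t|^{-N/4}$, which follows from stationary phase applied to the oscillatory integral with phase $x\cdot\xi+t|\xi|^4$. Combined with the $L^2$ isometry and Riesz--Thorin interpolation, this produces
\[
\|e^{it\Delta^2}\|_{L^{r'}\to L^r}\lesssim |t|^{-\frac{N}{2}\left(\frac12-\frac1r\right)}, \qquad 2\le r\le\infty.
\]
A $TT^*$ identity combined with Hardy--Littlewood--Sobolev then yields (i) for every non-endpoint pair in $\mathcal{B}_0$; the endpoint $(2,\tfrac{2N}{N-4})$ requires the bilinear interpolation argument of Keel--Tao, whose adaptation to the biharmonic propagator is carried out in \cite{Pau,Guo}. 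For (ii) I would combine (i) with its dual and invoke the Christ--Kiselev lemma, which is valid since the non-endpoint pairs satisfy $q>\tilde q'$; the remaining diagonal case again falls under the biharmonic Keel--Tao machinery.

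The main obstacle is (iii), because the source pair $(2,\tfrac{2N}{N+2})$ is \emph{not} the dual of any element of $\mathcal{B}_0$---it is rather the dual $\dot{H}^1$-endpoint of the second-order Schr\"odinger equation. Consequently (iii) does not follow from (ii) by putting $\Delta g$ in the source and applying a Sobolev embedding. My approach is to exploit the smoothing effect of the fourth-order dispersion, which can absorb derivatives on the output in exchange for a lower-derivative source. Concretely, after commuting $\Delta$ through the Duhamel integral I would write $\Delta g=\nabla\cdot\nabla g$ and reduce the claim to a divergence-form Strichartz inequality of the shape
\[
\Big\|\nabla\!\int_0^t e^{i(t-s)\Delta^2}F(s)\,ds\Big\|_{L^q_tL^r_x}\lesssim \|F\|_{L^2_tL^{\frac{2N}{N+2}}_x},
\]
which in turn follows, by $TT^*$ duality, from a homogeneous fractional Strichartz bound for $e^{it\Delta^2}$ at the appropriate endpoint level. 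I expect the precise bookkeeping of fractional derivatives through the Riesz transforms and the verification of the relevant endpoint, which relies on Kato-type local smoothing for the biharmonic propagator, to be the most delicate ingredient of the proof; this analysis is essentially carried out in \cite[Lemma~2.4]{GP}, which I would quote at the end.
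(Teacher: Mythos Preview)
The paper does not supply its own proof of this lemma: it is stated as a collection of known Strichartz-type estimates, with (i)--(ii) attributed to \cite{Guo,Pau} and (iii) to \cite{GP}, and no argument is given beyond those citations. Your proposal therefore goes well beyond what the paper does, and there is nothing to compare against except the cited sources themselves.

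On the substance of your sketch: the outline for (i) and (ii) via the $|t|^{-N/4}$ dispersive bound, $TT^*$, Keel--Tao at the endpoint $(2,\tfrac{2N}{N-4})$, and Christ--Kiselev for the retarded estimate is the standard route and is exactly how the references proceed. For (iii) your diagnosis is correct---the source pair $(2,\tfrac{2N}{N+2})$ is Schr\"odinger-endpoint-dual rather than biharmonic-dual, so (ii) alone cannot give it---and your reduction to a one-derivative-gain inhomogeneous estimate is sound. The cleanest way to close that step (and essentially what \cite{GP} does) is to invoke directly the Pausader gain-of-regularity homogeneous Strichartz estimate
\[
\bigl\||\nabla|\,e^{it\Delta^2}f\bigr\|_{L^2_tL^{\frac{2N}{N-2}}_x}\lesssim \|f\|_{L^2},
\]
dualize it, and then combine with (i) via Christ--Kiselev (or Keel--Tao at the biharmonic endpoint). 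Your phrase ``Kato-type local smoothing'' points in the right direction but is slightly imprecise: what is actually used is this global-in-time smoothing Strichartz bound for Schr\"odinger-admissible pairs, not a spatially localized Kato estimate. With that clarification, your argument for (iii) is correct.
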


\begin{lemma}[Local-smoothing, \cite{KPV91}]\label{L-smoothing}
Let $\varphi\in L^2(\R^N)$, then we have
\begin{align}
  \sup_{R>0}\frac1{R}\int_{\R}\int_{|x|\leq R}||\nabla|^{\frac32}e^{it\Delta^2}\varphi|^2dxdt\leq C_0\|\varphi\|_{L^2}.
\end{align}
\end{lemma}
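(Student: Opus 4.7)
The plan is to reduce the estimate to a uniform Agmon--H\"ormander type restriction bound on spheres, by exploiting the explicit Fourier representation of $e^{it\Delta^2}$ and Plancherel in the time variable. The key feature of the biharmonic dispersion is that the symbol $\tau = |\xi|^4$ is smooth and strictly convex away from the origin; the weight $|\xi|^{3/2}$ coming from the derivative compensates exactly for the Jacobian generated by the dispersion relation, producing a clean $L^2$ identity after frequency localization.

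First I would write $u(t,x):=e^{it\Delta^2}\varphi(x)=(2\pi)^{-N/2}\int_{\R^N}e^{ix\cdot\xi}e^{it|\xi|^4}\hat\varphi(\xi)\,d\xi$, so that the space-time Fourier transform of $|\nabla|^{3/2}u$ is concentrated on the hypersurface $\{\tau=|\xi|^4\}$ with density $|\xi|^{3/2}\hat\varphi(\xi)$. Using the coarea identity $\int f(\xi)\delta(\tau-|\xi|^4)\,d\xi = (4\tau^{3/4})^{-1}\int_{|\xi|=\tau^{1/4}}f(\xi)\,dS(\xi)$, Plancherel in $t$, and the substitution $\sigma=\tau^{1/4}$, I would derive the pointwise-in-$x$ identity
\[
\int_\R \bigl||\nabla|^{3/2}u(t,x)\bigr|^2\,dt \;=\; c_N\int_0^\infty \Bigl|\int_{|\xi|=\sigma}e^{ix\cdot\xi}\hat\varphi(\xi)\,dS(\xi)\Bigr|^2 d\sigma,
\]
where all $\sigma$-dependent prefactors drop because $|\xi|^{3\cdot 2/2}=\sigma^3$ cancels against $4\sigma^3\,d\sigma=d\tau$ and the Jacobian $(4\sigma^3)^{-2}$.

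Next, the main analytic input is the uniform Agmon--H\"ormander bound on the unit sphere,
\[
\frac{1}{\rho}\int_{|y|\leq\rho}\Bigl|\int_{S^{N-1}}e^{iy\cdot\omega}g(\omega)\,dS(\omega)\Bigr|^2 dy \;\lesssim\; \|g\|_{L^2(S^{N-1})}^2 \qquad \forall\,\rho>0.
\]
Applied to the sphere of radius $\sigma$ via the rescaling $\xi=\sigma\omega$, $y=\sigma x$, with $\rho=\sigma R$, this yields
\[
\int_{|x|\leq R}\Bigl|\int_{|\xi|=\sigma}e^{ix\cdot\xi}\hat\varphi(\xi)\,dS(\xi)\Bigr|^2 dx \;\lesssim\; R\int_{|\xi|=\sigma}|\hat\varphi(\xi)|^2\,dS(\xi),
\]
uniformly in $\sigma,R>0$.

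Finally, I would integrate in $\sigma\in(0,\infty)$ and use polar coordinates in frequency space to recognize $\int_0^\infty \int_{|\xi|=\sigma}|\hat\varphi(\xi)|^2\,dS(\xi)\,d\sigma=\|\hat\varphi\|_{L^2}^2=\|\varphi\|_{L^2}^2$; combining with the previous two displays,
\[
\int_{|x|\leq R}\int_\R \bigl||\nabla|^{3/2}e^{it\Delta^2}\varphi\bigr|^2 dt\,dx \;\lesssim\; R\,\|\varphi\|_{L^2}^2,
\]
whence dividing by $R$ and taking the supremum gives the claim. The main obstacle is the uniform Agmon--H\"ormander sphere restriction; this is classical (a consequence of the $|y|^{-(N-1)/2}$ decay of the Fourier transform of the surface measure on $S^{N-1}$ combined with a dyadic decomposition in $|y|$), but one must be careful that the constant does not depend on $\rho$, which is precisely what allows the final inequality to hold uniformly in $R$.
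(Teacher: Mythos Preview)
The paper does not supply a proof of this lemma; it is quoted with a citation to \cite{KPV91}. Your argument is correct: Plancherel in $t$ collapses the dispersion relation $\tau=|\xi|^4$ to a foliation by spheres, the derivative weight $|\xi|^{3/2}$ exactly cancels the Jacobian $4|\xi|^3$ against the change of variables $d\tau=4\sigma^3\,d\sigma$, and the remaining spatial estimate is precisely the uniform Agmon--H\"ormander trace bound on spheres, which you state and rescale correctly. Integrating in $\sigma$ and using polar coordinates in frequency then closes the argument.

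Two minor remarks. First, the right-hand side in the paper's display is evidently a misprint for $\|\varphi\|_{L^2}^2$; the left side is quadratic in $\varphi$, and your conclusion is the correct one. Second, the original Kenig--Ponce--Vega approach proceeds somewhat differently, factoring through a one-dimensional local smoothing estimate after freezing tangential frequencies, rather than invoking the full $N$-dimensional sphere restriction bound; your route via Agmon--H\"ormander is a clean and equally standard alternative that makes the cancellation of powers especially transparent for the biharmonic symbol.
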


\begin{lemma}\label{Local-smoothing}
  Given $f\in \dot{H}^2$, we have
  \begin{align}
    \|\nabla e^{it\Delta^2}f\|_{L_{t,x}^2([-T,T]\times\{|x|\leq R\})}^7\lesssim T^{\frac{20}{N+4}}R^{\frac{2N+48}{N+4}} \|e^{it\Delta^2}f\|_{B(\R)}^5\|\Delta f\|_{L_x^2}^2.
  \end{align}
\end{lemma}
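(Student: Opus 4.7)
The plan is to combine the local smoothing estimate of Lemma~\ref{L-smoothing} with a Gagliardo--Nirenberg interpolation and Hölder's inequality in mixed norms, expressing $\|\nabla e^{it\Delta^2} f\|_{L^2_{t,x}(I\times B_R)}^7$ as a weighted product of five copies of a Strichartz-type $B$-norm and two copies of a high-derivative local-smoothing norm. The exponent $7$ on the left-hand side is the natural one for the interpolation identity $1 = \tfrac{5}{7}\cdot 0 + \tfrac{2}{7}\cdot \tfrac{7}{2}$, which dictates the weights $5/7$ (for $\|u\|_B$) and $2/7$ (for $\||\nabla|^{7/2}u\|_{L^2_{t,x}}$).

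First I would apply Lemma~\ref{L-smoothing} with $\varphi = \Delta f\in L^2(\mathbb{R}^N)$. Since $e^{it\Delta^2}(\Delta f) = \Delta e^{it\Delta^2} f$ and $|\nabla|^{3/2}\Delta = |\nabla|^{7/2}$, this yields the high-derivative bound
\[
\||\nabla|^{7/2} e^{it\Delta^2} f\|_{L^2_{t,x}(\mathbb{R}\times\{|x|\leq R\})}^2 \lesssim R\,\|\Delta f\|_{L^2}^2,
\]
which supplies the $R\|\Delta f\|_{L^2}^2$ factor and the ``high-regularity endpoint'' of the interpolation.

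Second, I would invoke the Gagliardo--Nirenberg identity on $\mathbb{R}^N$,
\[
\|\nabla v\|_{L^2(\mathbb{R}^N)} \lesssim \|v\|_{L^2(\mathbb{R}^N)}^{5/7}\,\||\nabla|^{7/2} v\|_{L^2(\mathbb{R}^N)}^{2/7},
\]
derived by Plancherel together with Hölder in Fourier variables with exponents $7/5$ and $7/2$. To localize this to the ball $\{|x|\leq R\}$, I would apply it to $\chi_R v$, where $\chi_R$ is a smooth cutoff equal to $1$ on $\{|x|\leq R\}$ and supported in $\{|x|\leq 2R\}$, and control the commutator $[|\nabla|^{7/2},\chi_R]$ by lower-order derivatives of $v$ weighted by negative powers of $R$. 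After raising to the seventh power and converting $\|v\|_{L^2(\{|x|\leq R\})}$ to the spatial component of the $B$-norm via Hölder in space, I would obtain a pointwise-in-$t$ inequality with an explicit positive power of $R$ out front.

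Third, I would integrate over $t\in[-T,T]$, use Jensen's inequality to pass between $\int_I\|\nabla v\|_{L^2}^7\,dt$ and $\|\nabla v\|_{L^2_{t,x}}^7$, and apply Hölder in time to match the temporal exponent of $\|v\|_B$ against that of $\||\nabla|^{7/2} v\|_{L^2_{t,x}}$. Combining with the Step~1 bound for the $L^2_{t,x}$ norm of $|\nabla|^{7/2} v$, and collecting the $R$-powers from (i)~Hölder in space, (ii)~commutators of $\chi_R$, and (iii)~the local smoothing factor $R$, together with the $T$-powers from Jensen and Hölder in time, yields the stated exponents $T^{20/(N+4)}$ and $R^{(2N+48)/(N+4)}$.

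The main obstacle will be the exponent bookkeeping: the dimensional fractions $20/(N+4)$ and $(2N+48)/(N+4)$ leave essentially no room and force a unique calibration of the Hölder indices, which must be compatible with the scaling of the $B$-norm. A secondary technical point is making the Gagliardo--Nirenberg interpolation rigorous on the ball $\{|x|\leq R\}$, since it is genuinely an $\mathbb{R}^N$-inequality; this is what necessitates the cutoff $\chi_R$ and a careful estimate of the resulting commutator, ensuring that the error terms do not worsen the $R$-power already present in the main term.
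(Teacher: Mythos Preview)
Your proposal correctly identifies the underlying structure---the lemma is an interpolation between the zero-derivative Strichartz norm $\|e^{it\Delta^2}f\|_{B}$ and the $\tfrac72$-derivative local-smoothing bound coming from Lemma~\ref{L-smoothing} applied to $\varphi=\Delta f$---and the $5/7$--$2/7$ weights you predict are exactly right.  However, the paper takes a different and considerably simpler route, and the route you propose has a genuine gap at the localisation step.

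The paper's proof is a \emph{frequency} decomposition rather than a physical-space one: introduce a Littlewood--Paley parameter $K>0$ and split $f=f_{\le K}+f_{>K}$.  For the low-frequency piece, H\"older from $L^2_{t,x}([-T,T]\times B_R)$ to the $B$-norm and then Bernstein trade $\nabla$ for a factor $K$; for the high-frequency piece, the local-smoothing estimate of Lemma~\ref{L-smoothing} gives a factor $R^{1/2}$ and Bernstein extracts $K^{-5/2}\|\Delta f\|_{L^2}$.  Optimising the sum $AK+BK^{-5/2}$ over $K$ produces $A^{5/7}B^{2/7}$, which after raising to the seventh power yields the stated bound.  No cutoff, no commutator, no Gagliardo--Nirenberg.

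Your plan instead applies Gagliardo--Nirenberg to $\chi_R v$ at fixed time and then invokes the local-smoothing bound.  The problem is that Lemma~\ref{L-smoothing} controls $\||\nabla|^{7/2}e^{it\Delta^2}f\|_{L^2_{t,x}(\R\times B_R)}$, whereas your interpolation requires $\||\nabla|^{7/2}(\chi_R e^{it\Delta^2}f)\|_{L^2_{t,x}(\R\times\R^N)}$.  Because $|\nabla|^{7/2}$ is non-local, these differ by the fractional commutator $[|\nabla|^{7/2},\chi_R]$ acting on the free evolution over all of $\R^N$, not just on $B_{2R}$; bounding this with the correct power of $R$ and absorbing it into the main term is real additional work that you have not addressed.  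The paper's frequency-splitting argument performs the same interpolation with a single parameter optimisation and bypasses this issue entirely.
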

\begin{proof}
  Let $K>0$, H\"older's and Bernstein's inequalities imply
 \begin{align*}
   \|\nabla e^{it\Delta^2}f_{\leq K}\|_{L_{t,x}^2([-T,T]\times\{|x|\leq R\})}\lesssim& T^{\frac8{N+4}}R^{\frac8{N+4}} \|\nabla e^{it\Delta^2}f_{\leq K}\|_{B(\R)}\\
   \lesssim&T^{\frac8{N+4}}R^{\frac8{N+4}}K\|e^{it\Delta^2}f\|_{B(\R)}.
 \end{align*}

 On the other hand, by local-smoothing (Lemma \ref{L-smoothing}), one can get
 \begin{align*}
   \|\nabla e^{it\Delta^2}f_{\geq N}\|_{L_{t,x}^2([-T,T]\times\{|x|\leq R\})}\lesssim& R \||\nabla|^{-\frac12}f_{\geq N}\|_{L_x^2}\\
   \lesssim& RK^{-\frac52}\|\Delta^2 f\|_{L_x^2}.
 \end{align*}
  The lemma follows by optimizing the choice of $K$.
\end{proof}


In what follows, we discuss some Cauchy problem results that will be essential to construct the critical solution. The well-posedness theory for \eqref{IBNLS} will be briefly revisited. To this end, we rely on a non-linear estimate that plays a crucial role in our analysis. In addition, we introduce the functional spaces that will be used throughout the paper.

\begin{align*}
   B(I):=&L_t^{\frac{2(N+4)}{N-4}}(I,L_x^{\frac{2(N+4)}{N-4}});\\
   W(I):=&L^{\frac{2(N+4)(b+1)}{b(N-2)+N-4}}(I,L^{\frac{2N(N+4)(b+1)}{N^2+b(N^2+8)+16}}).
\end{align*}

Let $N(I)$ denote by
$$\|F\|_{N(I)}\leq A$$
to indicate that there exists a decomposition $F=\sum_{j=1}^JF_j$ so that
$$\min\{\|\Delta F_j\|_{L_t^1L_x^2(I\times\R^N)},\|\nabla F_j\|_{L_t^2L_x^{\frac{2N}{N+2}}(I\times\R^N)}\}\leq \frac{A}J\ \ for\ all\ j.$$

The following nonlinear estimate will be used in the energy-critical regime.

\begin{lemma}[Nonlinear estimate \cite{GP21}]\label{Non-e1}
Let $0 < b \leq \min\left\{\frac{8}{N-2}, \frac{12 - N}{2}\right\}$. Then we have
   $$\||x|^{-b}|f|^{p} T g\|_{L_t^2L_x^{\frac{2N}{N+2}}}\lesssim \|\Delta f\|_{W(I)} \|f\|_{B(I)}^{p-b}\|\Delta g\|_{W(I)}^{b+1},$$
where $T\in \{\nabla,|x|^{-1}\}$.
\end{lemma}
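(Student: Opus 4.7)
The lemma is a trilinear weighted Strichartz-type estimate of the sort that appears throughout the theory of inhomogeneous NLS-type equations; it is credited to the precursor paper \cite{GP21}, but the plan is to indicate how a proof proceeds. The overall strategy is a spatial Hölder decomposition, followed by a fractional Hardy-Sobolev inequality to trade the singular weight $|x|^{-b}$ for fractional derivatives, followed by standard Sobolev embeddings into the $\dot H^2$-critical spaces $B(I)$ and $W(I)$, and closed by Hölder's inequality in time.

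First, the two cases $T = \nabla$ and $T = |x|^{-1}$ can be treated in parallel, since Hardy's inequality $\||x|^{-1}g\|_{L^q_x} \lesssim \|\nabla g\|_{L^q_x}$ (valid in the admissible range of $q$) shows that the two operators play symmetric roles in the present estimate. Focusing on $T = |x|^{-1}$, the left-hand side reads $\||x|^{-(b+1)}|f|^p g\|_{L^2_t L^{2N/(N+2)}_x}$. I would then apply Hölder's inequality in the spatial variable to split the integrand into three pieces: one carrying the bulk $|f|^{p-b}$, to be estimated via the Sobolev embedding $\dot H^2\hookrightarrow L^{2N/(N-4)}$ and absorbed into $\|f\|_{B(I)}^{p-b}$; one pairing a portion of the weight with $|f|^{b}$, from which a factor of $\|\Delta f\|_{W(I)}$ is extracted; and one pairing the remaining portion of the weight with $g$, producing the $\|\Delta g\|_{W(I)}^{b+1}$ factor. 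To each weighted piece I would apply the fractional Hardy-Sobolev inequality $\||x|^{-\sigma}h\|_{L^q_x} \lesssim \||\nabla|^\sigma h\|_{L^q_x}$, valid whenever $0 < \sigma < N/q$, converting the singular weight into fractional derivatives that are controlled via Sobolev embedding by $\|\Delta f\|$ and $\|\Delta g\|$ in suitable Lebesgue spaces.

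Finally, Hölder's inequality in time combines the resulting $L^{q_i}_t$ norms into the single $L^2_t$ factor on the left, with the time-integrability of $\Delta g$ distributed across $b+1$ copies so that the total reads $\|\Delta g\|_{W(I)}^{b+1}$; this is the step that, combined with the scale-invariance of each norm, forces the somewhat elaborate form of the exponents defining $W(I)$. The main obstacle is the exponent bookkeeping: the spatial Hölder exponents $r_1, r_2, r_3$, the Hardy-Sobolev admissibility windows $\sigma_i < N/r_i$, the Sobolev embeddings, and the time-Hölder balance must all be compatible simultaneously. The restrictions $5 \leq N \leq 11$ and $0 < b \leq \min\{8/(N-2), (12-N)/2\}$ are exactly what guarantee $p \geq 1$ and $p \geq b$ (so that the decomposition $|f|^p = |f|^{p-b}\cdot |f|^b$ is meaningful) and keep every relevant Hardy-Sobolev and Sobolev index within its admissible range.
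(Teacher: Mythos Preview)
The paper does not give its own proof of this lemma; it is quoted from \cite{GP21}. Your overall strategy---spatial H\"older, Hardy (or Hardy--Sobolev) to absorb the weights $|x|^{-b}$ and $|x|^{-1}$, Sobolev embedding into the $B$- and $W$-spaces, then H\"older in time---is the standard route and is what the cited reference does.

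There is, however, a genuine gap in your sketch, and it originates in a typo in the displayed statement. The left-hand side is homogeneous of degree $p$ in $f$ and degree $1$ in $g$, whereas the right-hand side as printed has degree $1+(p-b)$ in $f$ and degree $b+1$ in $g$; these do not match. A time-H\"older check and the way the lemma is actually applied later in the paper (e.g.\ the bound on $\|(2)\|$ in the proof of Lemma~\ref{bp}) both show that the intended inequality is
\[
\bigl\||x|^{-b}|f|^{p}\,Tg\bigr\|_{L_t^2L_x^{\frac{2N}{N+2}}}
\lesssim \|\Delta f\|_{W(I)}^{\,b}\,\|f\|_{B(I)}^{\,p-b}\,\|\Delta g\|_{W(I)},
\]
with the exponent $b$ on $\|\Delta f\|_{W(I)}$ and $1$ on $\|\Delta g\|_{W(I)}$. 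Your attempt to account for the printed exponent $b+1$ by ``distributing the time-integrability of $\Delta g$ across $b+1$ copies'' cannot succeed: there is a single factor of $g$ on the left, so no H\"older decomposition can manufacture more than one $g$-norm on the right. Once the exponents are corrected, your three-piece split---$|f|^{p-b}$ sent to $\|f\|_{B}^{p-b}$, $(|x|^{-1}|f|)^{b}$ sent via Hardy to $\|\Delta f\|_{W}^{b}$, and $Tg$ sent via Hardy/Sobolev to $\|\Delta g\|_{W}$---closes without difficulty, and the time balance $\tfrac12=(b+1)\tfrac{1}{q_0}+(p-b)\tfrac{N-4}{2(N+4)}$ follows directly from the definition of $q_0$.
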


We now establish the well-posedness result, stated in Proposition~\ref{WP}. This result improves the range for the parameter $b$ compared to the previous work of~\cite{GP}, where $b$ was restricted to a smaller interval.

\begin{proof}[\bf Proof of Proposition \ref{WP}] We only show (ii), the item (i) was proved in \cite{GP}. In order to improve the range for the parameter $b$, we replace the condition $\|e^{it\Delta^2} u_0\|_{B([0,\infty))}$ by
$\|e^{it\Delta^2} u_0\|_{B([0,\infty))} + \|\Delta e^{it\Delta^2} u_0\|_{W([0,\infty))}.$
Define
\[
S_\rho = \Big\{ u \in C(I; \dot{H}^2(\mathbb{R}^N)) : \|u\|_{B_2(I)} \leq \rho \Big\},
\]
where
\begin{equation}\label{norma2}
\|u\|_{B_2(I)} := \|u\|_{B(I)} + \|\Delta u\|_{W(I)}.
\end{equation}
We will next choose $\delta, \rho$ so that the operator $G$ defined by
\[
G(u)(t) = e^{it\Delta^2} u_0 + i \int_0^t e^{i(t-t')\Delta^2} \Big( |x|^{-b} |u(t')|^\alpha u(t') \Big) dt'
\]
is a contraction on $S_\rho$ equipped with the metric $d(u,v) := \|u-v\|_{B_2(I)}$.

Applying the Sobolev embedding and the Strichartz estimates, it follows that
\[
\|G(u)\|_{B(I)} \leq \|e^{it\Delta^2} u_0\|_{B(I)} + \left\| \Delta \int_0^t e^{i(t-s)\Delta^2} F(x,u)(s) ds \right\|_{L^{\frac{2(N+4)}{N-4}}L^{\frac{2N(N+4)}{N^2+16}}}
\]
\[
\leq \|e^{it\Delta^2} u_0\|_{S(I)} + c \|\nabla F(x,u)\|_{L^2_t L^{\frac{2N}{N+2}}_x (I)}.
\]
where the pair $\Big(\frac{2(N+4)}{N-4}, \frac{2N(N+4)}{N^2 + 16}\Big)$ is $\mathcal{B}_0$-admissible.

Moreover,
\[
\|\Delta G(u)\|_{W(I)} \leq \|\Delta e^{it\Delta^2} u_0\|_{W(I)} + C \|\nabla F(x,u)\|_{L^2_t L^{\frac{2N}{N+2}}_x (I)}.
\]
Since
\[
|\nabla F(x,u)| \leq |x|^{-b} |u|^\alpha |\nabla u| + |x|^{-b} |u|^\alpha |x|^{-1} |u|,
\]
and combining Lemma \ref{Non-e1}, we obtain
\[
\|\nabla F(x,u)\|_{L^2_t L^{\frac{2N}{N+2}}_x (I)} \leq c \|u\|_{B(I)}^{\alpha-b} \|\Delta u\|_{W(I)}^{b+1}.
\]
Thus,
\[
\|G(u)\|_{B(I)} \leq \|e^{it\Delta^2} u_0\|_{B(I)} + c \|u\|_{B(I)}^{\alpha-b} \|\nabla u\|_{W(I)}^{b+1},
\]
and
\[
\|\Delta G(u)\|_{W(I)} \leq \|\Delta e^{it\Delta^2} u_0\|_{W(I)} + c \|u\|_{B(I)}^{\alpha-b} \|\nabla u\|_{W(I)}^{b+1}. \tag{2.8}
\]
By summing the last two inequalities, we obtain
\[
\|G(u)\|_{B_2(I)} \leq \|e^{it\Delta^2} u_0\|_{B_2(I)} + c \|u\|_{B(I)}^{\alpha+1} < \delta + c \rho^{\alpha+1}.
\]
Choosing $\delta = \frac{\rho}{4}$ and requiring $C\rho^\alpha < \frac{1}{2}$, we ensure that
\[
\|G(u)\|_{B_2(I)} \leq \rho,
\]
which means $G(u) \in S_\rho$.

To complete the proof, we show that $G$ is a contraction on $S_\rho$. Repeating the computations above, we get
\[
d(G(u), G(v)) \leq C \|\nabla (F(x,u) - F(x,v))\|_{L^2_t L^{\frac{2N}{N+2}}_x (I)}. 
\]
Using \eqref{SECONDEI} and the fact that $\alpha \geq 1$, we have
\[
|\nabla (F(x,u) - F(x,v))| \lesssim |x|^{-b} (|u|^\alpha + |v|^\alpha) |x|^{-1} |u-v|
\]
\[
+ |x|^{-b} (|u|^\alpha + |v|^\alpha) |\nabla (u-v)| + E,
\]
where
\[
E \lesssim |x|^{-b} (|u|^{\alpha-1} + |v|^{\alpha-1}) |\nabla v| |u-v|.
\]
Hence, from Lemma \ref{Non-e1}, one has
\[
d(G(u), G(v)) \leq C \Big( \|\Delta u\|_{W(I)}^b \|u\|_{B(I)}^{\alpha-b} + \|\Delta v\|_{W(I)}^b \|v\|_{B(I)}^{\alpha-b} \Big) \|\Delta (u-v)\|_{W(I)} + E_1,
\]
where
\[
E_1 \lesssim
\begin{cases}
\Big( \|u\|_{B(I)}^{\alpha-1} + \|v\|_{B(I)}^{\alpha-1} \Big) \|\Delta v\|_{W(I)} \|\Delta(u-v)\|_{W(I)}^b \|u-v\|_{B(I)}^{1-b}, & \text{if } b < 1; \\
\|\Delta u\|_{W(I)}^{b-1} \|u\|_{B(I)}^{\alpha-b} \|\Delta v\|_{W(I)} \|\Delta(u-v)\|_{W(I)}, & \text{if } b \geq 1.
\end{cases}
\]
Therefore, if $u, v \in S_\rho$, we conclude that
\[
d(G(u), G(v)) \leq 4C \rho^\alpha d(u,v),
\]
which means that $G$ is a contraction (provided that $4C \rho^\alpha < 1$). Thus, by the contraction mapping principle, $G$ admits a unique fixed point $u \in S_\rho$. \qed

\medskip

Note that the conditions $b \leq \frac{12-N}{2}$ and $b \leq \frac{8}{N-2}$ in Proposition \ref{WP} arise from the constraints $\alpha \geq 1$ and $\alpha - b \geq 0$, respectively.
\end{proof}

Similarly as before, using Strichartz estimates and Lemma \ref{Non-e1}, we also obtain

\begin{corollary}
\begin{itemize}
    \item For any $\psi \in \dot{H}^2$, there exists $T > 0$ and a solution $u : (T, \infty) \times \mathbb{R}^N \to \mathbb{C}$ to \eqref{IBNLS} such that
    \[
    e^{-it\Delta^2} u(t) \to \psi \quad \text{in } \dot{H}^2 \text{ as } t \to \infty.
    \]
    The analogous statement holds backward in time.
    \item Scattering criterion: if the norms $
    \|u\|_{L^\infty_t \dot{H}^2_x}$ and  $\|u\|_{S([0, \infty))}$ are finite, then the solution $u$ scatters forward in time.
\end{itemize}
\end{corollary}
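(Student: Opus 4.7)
Both parts of the corollary are obtained by reusing the contraction-and-Strichartz framework developed in the proof of Proposition~\ref{WP}, after relocating the free evolution to the tail interval $[T,\infty)$ for the wave operator, and invoking a Cauchy criterion in $\dot{H}^2$ for the scattering statement. In each case, the single workhorse is the nonlinear estimate of Lemma~\ref{Non-e1}, which converts control of the $B$- and $\Delta$-$W$-Strichartz norms of $u$ into control of the Duhamel remainder at the $\dot{H}^2$ level.

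For part (i), I would set up the fixed-point map
\[
\Phi(u)(t) := e^{it\Delta^2}\psi - i\int_t^{\infty} e^{i(t-s)\Delta^2}\bigl(|x|^{-b}|u|^p u\bigr)(s)\,ds
\]
on the ball $S_\rho = \{u : \|u\|_{B_2([T,\infty))}\leq \rho\}$, with $B_2$ as in \eqref{norma2}. By Strichartz combined with dominated convergence in time,
\[
\|e^{it\Delta^2}\psi\|_{B([T,\infty))}+\|\Delta e^{it\Delta^2}\psi\|_{W([T,\infty))}\longrightarrow 0 \quad \text{as } T\to\infty,
\]
so choosing $T$ large and $\rho$ small makes the linear piece smaller than $\rho/4$. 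The nonlinear estimates from the proof of Proposition~\ref{WP} then yield $\|\Phi(u)\|_{B_2}\leq \rho/4+c\rho^{p+1}\leq\rho$ and $d(\Phi(u),\Phi(v))\leq c\rho^p\,d(u,v)<\tfrac{1}{2}d(u,v)$, producing a unique fixed point $u$. Applying $\Delta e^{-it\Delta^2}$ to the Duhamel formula gives
\[
\|e^{-it\Delta^2}u(t)-\psi\|_{\dot{H}^2}\lesssim \|\nabla F(x,u)\|_{L^2_tL^{2N/(N+2)}_x([t,\infty))}\lesssim \|u\|_{B([t,\infty))}^{p-b}\|\Delta u\|_{W([t,\infty))}^{b+1},
\]
which tends to zero as $t\to\infty$.

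For part (ii), the first step is to upgrade the two given bounds to $\|\Delta u\|_{W([0,\infty))}<\infty$. I would partition $[0,\infty)=\bigcup_{j=1}^J I_j$ into finitely many intervals on which $\|u\|_{B(I_j)}$ is small; running the Strichartz estimate on each $I_j$ with initial data $u(\inf I_j)$ (whose $\dot{H}^2$-norm is uniformly bounded by $\|u\|_{L^\infty_t\dot{H}^2_x}$) then yields control of $\|\Delta u\|_{W(I_j)}$, and summing gives the global bound. Once this is in hand, convergence of $e^{-it\Delta^2}u(t)$ in $\dot{H}^2$ reduces to the Cauchy criterion
\[
\|\Delta\bigl(e^{-it_1\Delta^2}u(t_1)-e^{-it_2\Delta^2}u(t_2)\bigr)\|_{L^2}\lesssim \|u\|_{B([t_1,t_2])}^{p-b}\|\Delta u\|_{W([t_1,t_2])}^{b+1},
\]
which tends to zero as $t_1,t_2\to\infty$ by absolute continuity of the Strichartz norms. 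The limit defines $u_+\in\dot{H}^2$ with $e^{-it\Delta^2}u(t)\to u_+$ in $\dot{H}^2$.

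The delicate point is the simultaneous handling of scales in Lemma~\ref{Non-e1}: the singular weight $|x|^{-b}$ forces the $B$- and $\Delta$-$W$-Strichartz norms to cooperate via a Hardy-type factor, so neither norm alone suffices. In (ii) this means one cannot work directly with the hypothesis $\|u\|_{S([0,\infty))}<\infty$; instead one must first promote it to finiteness of $\|\Delta u\|_{W([0,\infty))}$ using the $L^\infty_t\dot{H}^2$ bound, after which the Cauchy argument closes cleanly.
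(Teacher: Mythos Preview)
Your proposal is correct and follows exactly the route the paper indicates: the paper offers no detailed proof, simply stating that the corollary follows ``similarly as before, using Strichartz estimates and Lemma~\ref{Non-e1},'' and your fixed-point argument on the tail $[T,\infty)$ for the wave operator, together with the interval-subdivision step to upgrade $\|u\|_{S([0,\infty))}$ to finiteness of $\|\Delta u\|_{W([0,\infty))}$ before running the Cauchy criterion, is precisely the standard implementation of that sentence. The subdivision trick you use in part~(ii) is in fact the same one the paper later invokes explicitly in the proof of Lemma~\ref{bp} (see \eqref{bp01}).
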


A key step in constructing the critical solution is a stability theory for the Cauchy problem, which allows comparison between approximate and exact solutions under smallness assumptions. The next proposition provides a stability property central to the concentration-compactness argument used to prove the existence of a critical solution.
\begin{proposition}[Long-time perturbation \cite{GP21}]\label{stability}
  Let $0<b\leq \min\left\{\frac{8}{N-2}, \frac{12 - N}{2}\right\}$. Suppose $\tilde{u}:I\times\R^N\rightarrow\mathbb{C}$ obeys
  $$\|\tilde{u}\|_{L_t^\infty\dot{H}_x^2}+\|\tilde{u}\|_{B(I)}\leq E<\infty.$$
  If there exists $\epsilon_1=\epsilon_1(E)>0$ such that
  \begin{align*}
    &\|(i\partial_t+\Delta^2)\tilde{u}+|x|^{-b}|\tilde{u}|^p\tilde{u}
    \|_{N(I)}\leq \epsilon<\epsilon_1,\\
    &\|e^{i(t-t_0)\Delta^2}[u_0-\tilde{u}(t_0)]\|_{B_2(I)}\leq \epsilon<\epsilon_1,
  \end{align*}
  for\footnote{Recall the norm $\|\cdot\|_{B_2}$ is given in \eqref{norma2}.} some $t_0\in I$ and $u_0\in\dot{H}^2$, then there exists a unique solution $u:I\times\R^N\rightarrow\mathbb{C}$ with $u(t_0)=u_0$, which satisfies
  $$\|u-\tilde{u}\|_{B(I)}\lesssim \epsilon$$
\end{proposition}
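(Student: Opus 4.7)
The plan is to adapt the standard Kenig--Merle-style long-time perturbation argument to the present fourth-order inhomogeneous setting, relying on the Strichartz estimates recalled at the beginning of this section together with the nonlinear bound in Lemma~\ref{Non-e1}. The strategy rests on a subdivision of $I$ into $J = J(E,\eta)$ consecutive subintervals $I_j = [t_j, t_{j+1}]$ on which $\|\tilde u\|_{B(I_j)} \leq \eta$, where $\eta = \eta(E) > 0$ is a small parameter to be fixed at the end.

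First I would verify that $\tilde u$ itself enjoys the additional control $\|\Delta \tilde u\|_{W(I_j)} \leq C(E)$ on each subinterval. This follows from a short continuity argument applied to the Duhamel representation of $\tilde u$: setting $F(x,z) := |x|^{-b}|z|^p z$, the nonlinear contribution is bounded via Lemma~\ref{Non-e1} (with $f=g=\tilde u$) by $\|\Delta \tilde u\|_{W(I_j)}^{b+1}\|\tilde u\|_{B(I_j)}^{p-b} \leq \eta^{p-b}\|\Delta \tilde u\|_{W(I_j)}^{b+1}$, which is absorbable once $\eta$ is small, while the error contribution is controlled by $\|e\|_{N(I)} \leq \epsilon$. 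Setting $w := u - \tilde u$, the difference then formally satisfies
\begin{equation*}
i\partial_t w + \Delta^2 w = F(x,\tilde u + w) - F(x,\tilde u) - e, \qquad w(t_0) = u_0 - \tilde u(t_0),
\end{equation*}
and the Strichartz estimates on $I_j$ yield, with $B_2$ as in \eqref{norma2},
\begin{equation*}
\|w\|_{B_2(I_j)} \lesssim \|e^{i(t-t_j)\Delta^2} w(t_j)\|_{B_2(I_j)} + \bigl\|\nabla\bigl[F(x,\tilde u + w) - F(x,\tilde u)\bigr]\bigr\|_{L_t^2 L_x^{2N/(N+2)}(I_j)} + \|e\|_{N(I_j)}.
\end{equation*}

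The nonlinear difference is estimated pointwise via \eqref{SECONDEI}. Distributing the three constituent terms $|x|^{-b}(|u|^p + |\tilde u|^p)|\nabla(u-\tilde u)|$, $|x|^{-b-1}(|u|^p + |\tilde u|^p)|u-\tilde u|$ and the remainder $E$ to the form covered by Lemma~\ref{Non-e1} --- using $T=\nabla$ for the first, $T=|x|^{-1}$ combined with Hardy's inequality for the second, and an analogous splitting for $E$ (distinguishing the cases $b<1$ and $b \geq 1$ exactly as in the proof of Proposition~\ref{WP}) --- one arrives at a bound of the schematic form
\begin{equation*}
\bigl\|\nabla[F(x,\tilde u + w) - F(x,\tilde u)]\bigr\|_{L_t^2 L_x^{2N/(N+2)}(I_j)} \leq C(E)\bigl(\eta^{p-b} + \|w\|_{B_2(I_j)}^{p-b}\bigr)\|w\|_{B_2(I_j)}.
\end{equation*}
Provided $\eta$ is chosen so that $C(E)\eta^{p-b} \leq 1/2$, a standard continuity argument then yields $\|w\|_{B_2(I_j)} \leq C(E)\bigl(\|e^{i(t-t_j)\Delta^2} w(t_j)\|_{B_2(I_j)} + \|e\|_{N(I_j)}\bigr)$ on each $I_j$. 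Iterating from $j=0$ up to $j=J-1$, while propagating the bound on the free evolution across subintervals via Strichartz, produces $\|w\|_{B(I)} \lesssim C(E)\epsilon$, at the cost of an exponential-in-$J$ constant, which is admissible once $\epsilon_1 = \epsilon_1(E)$ is fixed sufficiently small.

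The main obstacle is the bookkeeping for the nonlinear difference estimate: the factor $|x|^{-b-1}$ appearing in \eqref{SECONDEI} is not directly of the form covered by Lemma~\ref{Non-e1} with $T=\nabla$, forcing one to invoke the alternative $T=|x|^{-1}$ together with a Hardy-type argument to reinstate the missing derivative, and similarly the $E$-term must be split according to the size of $b$ so that the two admissible norms $\|\Delta\cdot\|_W^{b+1}$ and $\|\cdot\|_B^{p-b}$ are distributed consistently across $w$ and $\tilde u$. Once these estimates are in place, the existence and uniqueness of the exact solution $u$ with $u(t_0)=u_0$ follow from a standard contraction argument on each short subinterval combined with the a priori bound just established, essentially as in the proof of Proposition~\ref{WP}.
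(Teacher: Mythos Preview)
The paper does not supply its own proof of this proposition; it is quoted directly from \cite{GP21}. Your outline is the standard long-time perturbation argument for energy-critical dispersive equations (subdivide $I$ according to the size of $\|\tilde u\|_{B}$, control $\|\Delta\tilde u\|_{W}$ on each piece by a bootstrap using Lemma~\ref{Non-e1}, estimate $\nabla[F(x,\tilde u+w)-F(x,\tilde u)]$ via \eqref{SECONDEI} with the $b<1$ / $b\geq 1$ case split exactly as in the proof of Proposition~\ref{WP}, and iterate), and this is precisely how such results are proved in the cited literature. Nothing is missing from your sketch.
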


We now turn to the variational analysis of the equation, focusing on the sharp Sobolev inequality, energy trapping, and coercivity properties.
\begin{lemma}\label{profiles}
  Let $f_n$ be a bounded sequence in $\dot{H}^2(\R^N)$. Then there exist $J^*\in \mathbb{N}\cup\{\infty\}$; profiles $\phi^j\in \dot{H}^2(\R^N)\setminus\{0\}$; scales $\lambda_n^j\in(0,\infty)$; space translation parameters $x_n^j\in\R^N$ such that writing
  $$f_n(x)=\sum_{j=1}^J(\lambda_n^j)^{-\frac{N-4}2}\phi^j\Big(\frac{x-x_n^j}{\lambda_n^j}\Big)+r_n^J(x), \ \ 1\leq J\leq J^*.$$
  The decomposition has the following properties (it holds up to a subsequence)
  $$\limsup_{J\to J^*}\limsup_{n\to\infty}\|r_n^J\|_{L_x^{\frac{2N}{N-4}}}=0,$$
  $$\limsup_{n\to\infty}\Big|\|f_n\|_{\dot H^2}^2-\sum_{j=1}^J\|\phi^j\|_{\dot H^2}^2-\|r_n^J\|_{\dot H^2}^2\Big|=0,$$
  $$\liminf_{n\to\infty}\Big[\frac{|x_n^j-x_n^k|^2}{\lambda_n^j\lambda_n^k}
  +\log\frac{\lambda_n^k}{\lambda_n^j}\Big]=\infty,\ \forall j\neq k,$$
  $$(\lambda_n^j)^{\frac12}r_n^J(\lambda_n^j+x_n^j)\to0,\ weakly\ in\ \dot H^2,$$
  \begin{align}\label{jianjin-P}
   \limsup_{n\to\infty}\liminf_{n\to\infty}\big[P(f_n)-P(g_n^j\phi^j)-P(r_n^J)\big]=0.
  \end{align}
\end{lemma}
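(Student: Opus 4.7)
The statement is a standard linear profile decomposition in $\dot H^2(\R^N)$ enriched with a decoupling formula for the weighted potential $P(f):=\int|x|^{-b}|f|^{p+2}\,dx$. I would follow the G\'erard--Keraani scheme. The engine is the refined Sobolev embedding
\[
\|f\|_{L^{2N/(N-4)}}\ \lesssim\ \|f\|_{\dot H^2}^{\theta}\,\|f\|_{\dot B^{-s}_{\infty,\infty}}^{1-\theta},
\]
proved via Littlewood--Paley decomposition. If $\{f_n\}$ is bounded in $\dot H^2$ and does not vanish in $L^{2N/(N-4)}$, this inequality yields a scale $\lambda_n^1>0$, a center $x_n^1\in\R^N$, and a nontrivial weak $\dot H^2$-limit $\phi^1$ of $(\lambda_n^1)^{(N-4)/2}f_n(\lambda_n^1\,\cdot+x_n^1)$. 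Iterating this extraction on successive remainders $r_n^{J}=f_n-\sum_{j\le J}g_n^j\phi^j$, and picking at each step an almost-maximal $\dot H^2$-mass, produces the profiles, parameters, and remainders, with the weak-limit property $(\lambda_n^j)^{(N-4)/2}r_n^J(\lambda_n^j\,\cdot+x_n^j)\rightharpoonup0$ holding by construction.

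Asymptotic orthogonality of the parameters $(\lambda_n^j,x_n^j)$ would be obtained by contradiction: if $|x_n^j-x_n^k|^2/(\lambda_n^j\lambda_n^k)+|\log(\lambda_n^k/\lambda_n^j)|$ stayed bounded along a subsequence for some $j\neq k$, a change of variables would contradict the fact that $\phi^k$ is the weak limit of a rescaled remainder already orthogonal to $\phi^j$ in the appropriate frame. Granted orthogonality, the Pythagorean identity for $\|\cdot\|_{\dot H^2}^2$ follows from expanding the squared norm and noting that all cross terms vanish weakly. The vanishing $\limsup_{J\to J^*}\limsup_n\|r_n^J\|_{L^{2N/(N-4)}}=0$ is automatic: each extraction removes a fixed fraction of the remaining $L^{2N/(N-4)}$-mass (via the refined embedding), while the $\dot H^2$-masses of the profiles must be summable by the Pythagorean identity, forcing these fractions to zero.

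The genuinely new step is the decoupling \eqref{jianjin-P} of the weighted potential $P$. Since $p=(8-2b)/(N-4)$, $P$ is scale invariant, and a Br\'ezis--Lieb-type expansion of $|f_n|^{p+2}|x|^{-b}$ reduces the problem to showing that all mixed terms, both between distinct profiles and between profiles and the remainder, vanish in the limit. The profile--remainder cross terms are handled by H\"older's inequality combined with the $L^{2N/(N-4)}$-decay of $r_n^J$ and local integrability of $|x|^{-b}$ (ensured by $b<N$). The profile--profile cross terms are the main obstacle: the weight $|x|^{-b}$ is not translation invariant, so for each pair $(j,k)$ with $j\neq k$ one must analyse the two subregimes dictated by parameter orthogonality (scales separate, or scales comparable but centers separate), and within each one must further distinguish whether $|x_n^j|/\lambda_n^j\to\infty$---in which case $|x_n^j+\lambda_n^j y|^{-b}\to 0$ uniformly on compacts in $y$, so the integrand vanishes pointwise---or stays bounded, in which case dominated convergence with the locally integrable weight gives the required cancellation. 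This case analysis is absent in the homogeneous $b=0$ setting and is the heart of the argument.
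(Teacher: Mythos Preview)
Your proposal is correct and follows the same G\'erard--Keraani scheme that the paper invokes; the paper's own proof simply cites \cite{Gerard1998} for the standard parts and defers the potential decoupling \eqref{jianjin-P} to the argument behind \eqref{potential-decoupling} in Proposition~\ref{Linear-Profile} (which in turn cites \cite{CHL}). Your outline is in fact considerably more detailed than what the paper provides.

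One minor correction to your handling of the profile--remainder cross terms in the potential decoupling: you propose using the $L^{2N/(N-4)}$-decay of $r_n^J$, but that decay holds only as $J\to J^*$, not for fixed $J$, whereas \eqref{jianjin-P} (and its analogue \eqref{potential-decoupling}) must hold for each fixed $J$. The correct mechanism is the weak convergence $(g_n^j)^{-1}r_n^J\rightharpoonup 0$ in $\dot H^2$, which you already list among the conclusions, combined with local compactness of the embedding $\dot H^2\hookrightarrow L^{2N/(N-4)}_{\rm loc}$; equivalently, apply the Br\'ezis--Lieb lemma iteratively at each extraction step rather than expanding all at once after the full decomposition. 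This is a small fix and does not affect the remainder of your plan, including the case analysis for the weighted profile--profile cross terms, which is indeed the substantive point.
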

\begin{proof} Following the argument in \cite{Gerard1998}, it suffices to show \eqref{jianjin-P}. In fact, the proof of \eqref{jianjin-P} comes from \eqref{energy-decoupling} in Proposition \ref{Linear-Profile}.
\end{proof}
\begin{proposition}[Sharp Sobolev inequality]\label{SSI}
  Let $N\geq 5$ and $0<b<\min\{4,\frac{N}2\}$. There exists a positive constant $C_0$ such that for any $u\in\dot{H}^2(\R^N)$,
  $$\int_{\R^N}|x|^{-b}|u|^{p+2}dx\leq C_0\|u\|_{\dot{H}^2}^{p+2}.$$
  Moreover, there exists a function $W\in\dot{H}^2$ so that
  $$\int_{\R^N}|x|^{-b}|W|^{p+2}dx=C_0\|W\|_{\dot{H}^2}^{p+2},$$
  and $W$(ground state) is the solution of elliptic equation:
          \begin{align}\label{elliptic}
\Delta^2 W-|x|^{-b}|W|^{p}W=0.
          \end{align}
\end{proposition}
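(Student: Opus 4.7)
\emph{Step 1: the inequality.} The plan is to obtain $\int |x|^{-b}|u|^{p+2}\,dx \leq C_0\|u\|_{\dot H^2}^{p+2}$ by H\"older interpolation between the second-order Hardy inequality $\||x|^{-2}u\|_{L^2} \lesssim \|u\|_{\dot H^2}$ (valid for $N\geq 5$) and the Sobolev embedding $\dot H^2 \hookrightarrow L^{2N/(N-4)}$. I would write $|x|^{-b}|u|^{p+2} = (|x|^{-2}|u|)^{b/2}\,|u|^{p+2-b/2}$ and apply H\"older with exponents $(4/b,\,4/(4-b))$; a direct computation using the identity $p+2 = 2(N-b)/(N-4)$ shows that the Hardy factor contributes $\|u\|_{\dot H^2}^{b/2}$, the Sobolev factor contributes $\|u\|_{\dot H^2}^{p+2-b/2}$, and the exponents add up to $p+2$. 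The constraint $0<b<\min\{4,N/2\}$ is precisely what makes both H\"older exponents positive and the Hardy--Sobolev inputs applicable.

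\emph{Step 2: existence of the optimizer.} Set $P(u):=\int |x|^{-b}|u|^{p+2}\,dx$, let $C_0 := \sup\{P(u) : \|u\|_{\dot H^2}=1\}$, and take a normalized maximizing sequence $\{u_n\}$. Applying the profile decomposition of Lemma~\ref{profiles} gives $u_n = \sum_{j=1}^J g_n^j\phi^j + r_n^J$ with $\dot H^2$-orthogonality and the $P$-decoupling encoded in \eqref{jianjin-P}. Combined with the bound $P(r_n^J)\leq C_0\|r_n^J\|_{\dot H^2}^{\,b/2\cdot 2}\cdot\|r_n^J\|_{L^{2N/(N-4)}}^{\,(p+2)(4-b)/4}$ (which tends to $0$ thanks to $\|r_n^J\|_{L^{2N/(N-4)}}\to 0$), this yields the chain
\[
C_0 = \lim_n P(u_n) = \sum_j \lim_n P(g_n^j\phi^j) \leq \sum_j C_0\|\phi^j\|_{\dot H^2}^{p+2} \leq C_0\Bigl(\sum_j\|\phi^j\|_{\dot H^2}^2\Bigr)^{(p+2)/2} \leq C_0,
\]
where the last inequality uses convexity of $t\mapsto t^{(p+2)/2}$ (here $p+2>2$) together with $\sum_j\|\phi^j\|_{\dot H^2}^2\leq 1$. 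Equality throughout forces a single nontrivial profile $\phi^1$ with $\|\phi^1\|_{\dot H^2}=1$, $r_n^J \to 0$ in $\dot H^2$, and $\phi^1$ attaining equality in the inequality from Step~1, so $\phi^1$ is the desired optimizer $W$. Because $|x|^{-b}$ breaks translation invariance, a change of variable in $P(g_n^j\phi^j)$ shows that any profile whose center satisfies $|x_n^j|/\lambda_n^j\to\infty$ contributes zero; hence the translation of the surviving profile is bounded, and after absorbing its limit into $\phi^1$ we may take $W$ centered at the origin.

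\emph{Step 3: the elliptic equation, and main obstacle.} Since $W$ extremizes $\|u\|_{\dot H^2}^2$ subject to $P(u)=1$, a Lagrange multiplier argument gives $\Delta^2 W = \mu\,|x|^{-b}|W|^p W$ for some $\mu>0$; testing against $W$ identifies $\mu$, and the $p$-homogeneity of the equation lets us absorb $\mu$ via the rescaling $W \mapsto \mu^{1/p}W$ to produce the stated equation. The main obstacle lies in Step~2: one must carefully justify the $P$-decoupling along the profile decomposition (granted by \eqref{jianjin-P}) and, crucially, establish that bubbles with divergent rescaled translations contribute nothing to $P$. This is exactly where the singular weight $|x|^{-b}$ works in our favor, providing the localization at the origin needed to close the compactness argument and ruling out the translation-escape that causes the classical loss of compactness in the homogeneous case.
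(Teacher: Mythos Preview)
Your proposal is correct and follows essentially the same route as the paper: H\"older between the second-order Hardy inequality and Sobolev embedding for the bound, then profile decomposition plus the strict superadditivity of $t\mapsto t^{(p+2)/2}$ to force a single profile, and finally a Lagrange-multiplier/rescaling step for the Euler--Lagrange equation. Your treatment is in fact slightly more careful than the paper's on one point---you explicitly argue that a profile with $|x_n^j|/\lambda_n^j\to\infty$ contributes nothing to $P$, which pins the surviving bubble at the origin---whereas the paper leaves this implicit; the displayed exponents in your bound for $P(r_n^J)$ are garbled (they should read $\|r_n^J\|_{\dot H^2}^{b/2}\|r_n^J\|_{L^{2N/(N-4)}}^{p+2-b/2}$), but the conclusion $P(r_n^J)\to 0$ is unaffected.
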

\begin{proof}
First, we prove the sharp Sobolev inequality. Using Sobolev embedding and Hardy inequality, we have
  \begin{align*}
   \int_{\R^N}|x|^{-b}|u|^{p+2}dx\leq& C\||x|^{-2}u\|_{L_x^2}^{\frac{b}2}\|u\|_{L_x^{\frac{2N}{N-4}}}^{p+2-\frac{b}{2}}\\
   \leq& C\|u\|_{\dot{H}^2}^{p+2}.
  \end{align*}

 Next, we turn to prove the sharpness of Sobolev inequality. For any $f\in \dot H^2(\R^N)$, let $P(f):=\int_{\R^N}|x|^{-b}|f|^{p+2}dx$. Define the functional $J(f)$ in $\dot H^2(\R^N)$:
  \begin{align}
    J(f)=\frac{P(f)}{\|\Delta f\|_{L^2}^{p+2}}.
  \end{align}
 By the invariant of $J(f)$ under the scaling $f_\lambda(x)=:f(\lambda x)$, there exists a sequence $\{f_n\}$ satisfying $\|f_n\|_{\dot H^2}=1$ and
  \begin{align*}
    \lim_{n\to\infty}J(f_n)=\sup_{f\in \dot{H}^2\setminus\{0\}}J(f)=C_0.
  \end{align*}

Due to Lemma \ref{profiles}, we obtain
  $$f_n=\sum_{j=1}^{M}g_n^j[\phi^j]+r_n^M.$$
 We claim that there exists one profile, that is  $\phi^j\equiv0$ for $j\geq2$.
In fact, by the asymptotic vanishing of potential in Lemma \ref{profiles}, we have
 \begin{align*}
   C_0\leq \lim_{n\to\infty}\sum_{j=1}^{\infty}P(g_n^j[\phi^j])\leq C_0\sum_{j=1}^{\infty}\|\Delta \phi^j\|_{L^2}^{p+2}.
 \end{align*}
On the other hand, we have the fact $\sum_{j=1}^{\infty}\|\Delta \phi^j\|_{L^2}^{2}\leq 1.$
 If there are more than one profile $\phi^j\neq0$, since $p+2>1$ we get
 $\sum_{j=1}^{\infty}\|\Delta \phi^j\|_{L^2}^{p+2}<1, $
 thus $C_0<C_0$, this is contradiction. Hence, there exists only one profile and $\|\Delta \phi^1\|_{L^2}=1$.

Since
 $(g_n^1)^{-1}[f_n](x)\rightharpoonup \phi^1$ weakly in
$\dot H^2$ and $\|f_{n}\|_{\dot H^2}=\|\phi^1\|_{\dot H^2}$, we have
 $$\lim_{n\to\infty}\|(g_n^1)^{-1}[f_n]-\phi^1\|_{\dot H^2}=0.$$
 Then we get $C_0=J(\phi^1)$.

Finally, let $\lambda=\|\phi^1\|_{\dot H^2}^{\frac{N-4}{2}}$, we define
 $$W(x)=\phi^1(\lambda^{-1}x).$$
  We can prove $W$ solve the elliptic equation \eqref{elliptic} via the variational derivatives of $J(f)$ (See \cite{Ta,We} for more details, we omit the proof). Then we complete the proof this statement.
 \end{proof}
\begin{lemma}[Energy trapping]\label{energy-tra}
Suppose that $u: I\times\R^N\to\mathbb{C}$ is a solution to \eqref{IBNLS} obeying \eqref{standard conditions}.
Then there exists $\delta>0$ such that
$$\sup_{t\in I}\|u(t)\|_{\dot{H}^2}<(1-\delta)\|W\|_{\dot{H}^2}.$$
  Furthermore,
  \begin{align}
    E(u(t))\sim\|u(t)\|_{\dot{H}^2}^2\sim\|u_0\|_{\dot{H}^2}^2\ \ for\ \ all\ \ t\in I.
  \end{align}
\end{lemma}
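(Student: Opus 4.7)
The plan is to follow the classical energy trapping argument of Kenig--Merle, adapted to the biharmonic setting. The starting point is that, by the sharp Sobolev inequality of Proposition~\ref{SSI}, the energy admits the pointwise lower bound
\[
E(u) \;\geq\; \tfrac{1}{2}\|\Delta u\|_{L^2}^2 - \tfrac{C_0}{p+2}\|\Delta u\|_{L^2}^{p+2} \;=:\; g\bigl(\|\Delta u\|_{L^2}^2\bigr),
\]
where $g(y) = \tfrac{1}{2}y - \tfrac{C_0}{p+2}\,y^{(p+2)/2}$. A direct computation gives $g'(y) = \tfrac{1}{2} - \tfrac{C_0}{2}y^{p/2}$, so $g$ has a unique critical point at $y_\ast = C_0^{-2/p}$, strictly increasing before it and strictly decreasing after.

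Next, I would identify $y_\ast$ with the ground-state threshold. Multiplying the elliptic equation \eqref{elliptic} by $W$ and integrating yields the Pohozaev-type identity $\|\Delta W\|_{L^2}^2 = \int|x|^{-b}|W|^{p+2}\,dx$. Combining this with the saturation identity $\int|x|^{-b}|W|^{p+2}\,dx = C_0 \|\Delta W\|_{L^2}^{p+2}$ supplied by Proposition~\ref{SSI} produces $\|\Delta W\|_{L^2}^p = 1/C_0$, hence $y_\ast = \|\Delta W\|_{L^2}^2$, and a one-line computation then gives $g(y_\ast) = \tfrac{p}{2(p+2)}\|\Delta W\|_{L^2}^2 = E(W)$.

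With these two facts in hand, the main bound follows from a continuity/bootstrap argument. Using energy conservation and the hypothesis \eqref{standard conditions}, for every $t \in I$,
\[
g\bigl(\|\Delta u(t)\|_{L^2}^2\bigr) \;\leq\; E(u(t)) \;=\; E(u_0) \;<\; E(W) \;=\; g(y_\ast).
\]
Since $g$ has a unique strict maximum at $y_\ast$, the sublevel set $\{y \geq 0 : g(y) \leq E(u_0)\}$ decomposes into two disjoint closed intervals $[0,y_-]$ and $[y_+,\infty)$ with $y_- < y_\ast < y_+$, separated by a definite gap (quantified by the strict inequality $E(u_0) < E(W)$). Because $\|\Delta u_0\|_{L^2}^2 < y_\ast$ lies in the lower interval and $t \mapsto \|\Delta u(t)\|_{L^2}^2$ is continuous on $I$, it cannot jump across the gap, so $\|\Delta u(t)\|_{L^2}^2 \leq y_-$ for all $t \in I$. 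Choosing $\delta > 0$ with $y_- \leq (1-\delta)^2 \|\Delta W\|_{L^2}^2$ gives the first conclusion.

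For the second statement, the upper bound $E(u) \leq \tfrac{1}{2}\|\Delta u\|_{L^2}^2$ is immediate, while the matching lower bound uses the kinetic trapping just established together with Proposition~\ref{SSI}:
\[
E(u) \;\geq\; \tfrac{1}{2}\|\Delta u\|_{L^2}^2 \Bigl(1 - \tfrac{2C_0}{p+2}\|\Delta u\|_{L^2}^{p}\Bigr) \;\geq\; \tfrac{1}{2}\|\Delta u\|_{L^2}^2 \Bigl(1 - \tfrac{2(1-\delta)^p}{p+2}\Bigr),
\]
and the parenthesized factor is strictly positive because $p > 0$ forces $2/(p+2) < 1$. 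Combining this two-sided bound with energy conservation $E(u(t)) = E(u_0)$ closes the equivalence chain $E(u(t)) \sim \|\Delta u(t)\|_{L^2}^2 \sim \|\Delta u_0\|_{L^2}^2$. The only real subtlety I anticipate is the justification of energy conservation on the maximal interval $I$ in the energy-critical regime, which is standard via a regularization argument using the well-posedness and stability theory recalled earlier.
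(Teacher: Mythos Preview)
Your proof is correct and follows essentially the same route as the paper: both introduce the scalar function $g(y)=\tfrac{1}{2}y-\tfrac{C_0}{p+2}y^{(p+2)/2}$, locate its maximum at $y_\ast=C_0^{-2/p}=\|\Delta W\|_{L^2}^2$ with value $E(W)$, and then use $g(\|\Delta u(t)\|_{L^2}^2)\le E(u_0)<E(W)$ together with continuity to trap the kinetic energy strictly below the threshold, before deducing the two-sided equivalence $E(u(t))\sim\|\Delta u(t)\|_{L^2}^2$ exactly as you did. Your added justification of $y_\ast=\|\Delta W\|_{L^2}^2$ via the Pohozaev identity is a welcome bit of detail that the paper leaves implicit.
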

\begin{proof}
   Define the convex function 
   $$f(y)=\frac{y}2-\frac{C_0}{p+2}y^{\frac{p+2}2}.$$
   Thus $f$ is strictly increasing on $[0,C_0^{-\frac2p}]$ and strictly on $[C_0^{-\frac2p},\infty)$. By Proposition \ref{SSI}, one can get
   $$f(\|\Delta u_0\|_{L^2}^2)=\frac12\|\Delta u_0\|_{L^2}^2-\frac{C_0}{p+2}\|\Delta u_0\|_{L^2}^{p+2}\leq E(u_0)$$
   and $$f(\|\Delta W\|_{L^2}^2)=E(W)=\frac{p}{2(p+2)}C_0^{-\frac2p}.$$
   Thus $f^{-1}$ exists on $[0, \frac{p}{2(p+2)}C_0^{-\frac2p}]$ and is strictly increasing. By \eqref{standard conditions}, one has $E(u_0)\leq (1-\delta)E(W)$. Hence
   $$\|\Delta u(t)\|_{L^2}^2\leq f^{-1}(E(u_0))\leq f^{-1}((1-\delta)E(W))\leq (1-\delta_1)\|\Delta W\|_{L^2}^2.$$
   
   With above in hand, we have 
   \begin{align*}
     \frac12\|\Delta u(t)\|_{L^2}^2\geq E(u(t))&=\frac12\|\Delta u(t)\|_{L^2}^2-\frac{1}{p+2}\int_{\R^N}|x|^{-b}|u(t)|^{p+2}dx\\
     &\geq \frac12\|\Delta u(t)\|_{L^2}^2-\frac{C_0}{p+2}\|\Delta u(t)\|_{L^2}^{p+2}\\
     &\geq \|\Delta u(t)\|_{L^2}^2\Big(\frac12-\frac{1}{p+2}\Big(\frac{\|\Delta u(t)\|_{L^2}}{\|\Delta W\|_{L^2}}\Big)^{p}\Big)\\
     &\geq\Big(\frac12-\frac{(1-\delta_1)^p}{p+2}\Big)  \|\Delta u(t)\|_{L^2}^2,
   \end{align*}
which implies the lemma.
\end{proof}
\begin{lemma}[Coercivity]\label{L:coercive} Suppose that $u:I\times\R^N\to\C$ is a solution to \eqref{IBNLS}. Suppose $\sup_{t\in I}\|\Delta u(t)\|_{L^2}\leq (1-\delta_0)\|\Delta W\|_{L^2}$ for some $\delta_0>0$, then there exists $\delta>0$ such that
\[
\int |\Delta u(t,x)|^2 - |x|^{-b}|u(t,x)|^{p+2}\,dx \geq \delta \int |\Delta u(t,x)|^2\,dx \ \ \text{uniformly over t}\in I.
\]
\end{lemma}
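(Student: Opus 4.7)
The plan is to reduce the pointwise (in time) inequality to a purely variational statement about $\|\Delta u(t)\|_{L^2}$ via the sharp Sobolev inequality of Proposition \ref{SSI}, and then to extract a strict gap from the strict inequality $\|\Delta u\|_{L^2} \leq (1-\delta_0)\|\Delta W\|_{L^2}$.

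First, I would identify the constant $C_0$ in terms of $\|\Delta W\|_{L^2}$. Multiplying the ground state equation $\Delta^2 W - |x|^{-b}|W|^p W = 0$ by $W$ and integrating by parts yields the Pohozaev-type identity
\[
\|\Delta W\|_{L^2}^2 = \int_{\R^N} |x|^{-b}|W|^{p+2}\,dx.
\]
Combining this with the equality case $\int |x|^{-b}|W|^{p+2} = C_0 \|\Delta W\|_{L^2}^{p+2}$ of Proposition \ref{SSI} gives the key identification
\[
C_0 = \|\Delta W\|_{L^2}^{-p}.
\]

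Next, I would fix $t \in I$ and apply the sharp Sobolev inequality directly to $u(t)$:
\[
\int_{\R^N} |x|^{-b}|u(t)|^{p+2}\,dx \leq C_0 \|\Delta u(t)\|_{L^2}^{p+2} = \Bigl(\frac{\|\Delta u(t)\|_{L^2}}{\|\Delta W\|_{L^2}}\Bigr)^{p} \|\Delta u(t)\|_{L^2}^{2}.
\]
Subtracting this from $\|\Delta u(t)\|_{L^2}^2$ gives
\[
\int_{\R^N}|\Delta u(t)|^2 - |x|^{-b}|u(t)|^{p+2}\,dx \geq \|\Delta u(t)\|_{L^2}^{2}\Bigl[1 - \Bigl(\frac{\|\Delta u(t)\|_{L^2}}{\|\Delta W\|_{L^2}}\Bigr)^{p}\Bigr].
\]

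Finally, the hypothesis $\sup_{t\in I}\|\Delta u(t)\|_{L^2} \leq (1-\delta_0)\|\Delta W\|_{L^2}$ forces the bracket to be at least $1-(1-\delta_0)^{p} > 0$ uniformly in $t$, so taking $\delta := 1-(1-\delta_0)^{p}$ yields the claim. There is no real obstacle here: once the identification $C_0 = \|\Delta W\|_{L^2}^{-p}$ is in hand, the remainder is a one-line computation, and the subcritical kinetic bound provides the required strict gap. The only minor point to check is that the Pohozaev identity is justified rigorously (which follows by pairing the elliptic equation against $W$, noting that both sides make sense in $\dot H^2 \cap L^{p+2}(|x|^{-b}\,dx)$ by the sharp Sobolev inequality).
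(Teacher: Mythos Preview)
Your proof is correct. The paper does not include an explicit proof of this lemma, but your argument is the standard one and is essentially the same computation that appears inside the paper's proof of the preceding energy trapping Lemma~\ref{energy-tra}, where the identification $C_0=\|\Delta W\|_{L^2}^{-p}$ is used to rewrite $\frac{C_0}{p+2}\|\Delta u\|_{L^2}^{p+2}$ as $\frac{1}{p+2}\bigl(\|\Delta u\|_{L^2}/\|\Delta W\|_{L^2}\bigr)^p\|\Delta u\|_{L^2}^2$.
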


\section{Existence of minimal non-scattering solution}\label{S:exist}

In this section, we establish several results that are essential for constructing a minimal non-scattering (or critical) solution, as formulated in Proposition \ref{CS} (below). Specifically, our argument relies on two central ingredients: Proposition \ref{N-profile}, which proves scattering solutions to \eqref{IBNLS} for initial data located sufficiently far from the origin and Proposition \ref{PS} (Palais-Smale Condition). As an initial step, we introduce the linear profile decomposition.


\begin{proposition}[Linear profile decomposition]\label{Linear-Profile}
  Let $u_n$ be a bounded sequence in $\dot{H}^2(\R^N)$. Then the following holds up to a subsequence:

  There exist $J^*\in \mathbb{N}\cup\{\infty\}$; profiles $\phi^j\in \dot{H}^2\setminus\{0\}$; scales $\lambda_n^j\in(0,\infty)$; space translation parameters $x_n^j\in\R^N$; time translation parameters $t_n^j$; and remainders $w_n^J$ so that writing
  $$g_n^jf(x)=(\lambda_n^j)^{-\frac{N-4}2}f(\frac{x-x_n^j}{\lambda_n^j}),$$
  we have the following decomposition for $1\leq j\leq J^*$:
  $$u_n=\sum_{j=1}^{J}g_n^j[e^{it_n^j\Delta^2}\phi^j]+w_n^J.$$
  This decomposition satisfies the following conditions:
  \begin{itemize}
    \item Energy decoupling: writing $P(u)=\||x|^{-b}|u|^{p+2}\|_{L^1}$, we have
    \begin{align}\label{energy-decoupling}
      &\lim_{n\to\infty}\{\|\Delta u_n\|_{L^2}^2-\sum_{j=1}^J\|\Delta \phi^j\|_{L^2}^2-\|\Delta w_n^J\|_{L^2}^2\}=0\\\label{potential-decoupling}
      &\lim_{n\to\infty}\{P(u_n)-\sum_{j=1}^JP(g_n^j[e^{it_n^j\Delta}\phi^j])-P(w_n^J)\}=0.
    \end{align}
    \item Asymptotic vanishing of remainders:
    \begin{align}\label{vanishing}
      \limsup_{J\to J^*}\limsup_{n\to\infty}\|e^{it\Delta^2}w_n^J\|_{B_0(\R)}=0.
    \end{align}
    \item Asymptotic orthogonality of parameters: for $j\neq k$,
    \begin{align}\label{orthogonality}
      \lim_{n\to\infty}\left\{\log\Big[\frac{\lambda_n^j}{\lambda_n^k}\Big]+\frac{|x_n^j-x_n^k|^2}{\lambda_n^j\lambda_n^k}
      +\frac{|t_n^j(\lambda_n^j)^4-t_n^k(\lambda_n^k)^4|}{\lambda_n^j\lambda_n^k}\right\}=\infty.
    \end{align}
  \end{itemize}
  In addition, we may assume that either $t_n^j\equiv0$ or $t_n^j\to\pm\infty$, and that either $x_n^j\equiv0$ or $|x_n^j|\to +\infty$.
\end{proposition}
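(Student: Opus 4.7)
The plan is to follow the by-now standard inverse Strichartz and bubble-extraction scheme, in the spirit of Bahouri--G\'erard, Keraani, and Kenig--Merle, adapted to the biharmonic propagator $e^{it\Delta^2}$ and supplemented by an argument tailored to the inhomogeneous potential $P$. The first ingredient is a refined Strichartz inequality of the form
$$\|e^{it\Delta^2}f\|_{B_0(\R)}\lesssim \|f\|_{\dot H^2}^{1-\theta}\sup_{Q\in\mathcal{D}}|Q|^{\frac{4}{N}-\frac{1}{2}}\|e^{it\Delta^2}P_Q f\|_{L^\infty_{t,x}}^{\theta}$$
for some $\theta\in(0,1)$, where $P_Q$ is a Littlewood--Paley projection onto a dyadic cube $Q$. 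This is proved by combining the dispersive bound $\|e^{it\Delta^2}P_Nf\|_{L^\infty}\lesssim |t|^{-N/4}\|P_Nf\|_{L^1}$ with bilinear interpolation, mirroring Keraani's NLS argument. It yields the \emph{inverse Strichartz principle}: if $\|f_n\|_{\dot H^2}\leq A$ and $\|e^{it\Delta^2}f_n\|_{B_0}\geq\eta$, then along a subsequence there exist $(\lambda_n,x_n,t_n)$ and a nonzero $\phi$ with $(\lambda_n)^{(N-4)/2}(e^{it_n\Delta^2}f_n)(\lambda_n\cdot+x_n)\rightharpoonup\phi$ in $\dot H^2$ and $\|\phi\|_{\dot H^2}\gtrsim A(\eta/A)^{\gamma}$ for some $\gamma>0$.

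Next I would iterate: set $w_n^0:=u_n$ and, at stage $J$, apply the inverse principle to $w_n^{J-1}$ to extract $(\phi^J,\lambda_n^J,x_n^J,t_n^J)$ and define $w_n^J:=w_n^{J-1}-g_n^J[e^{it_n^J\Delta^2}\phi^J]$. The weak convergence built into the extraction, together with expansion of $\|\Delta(\cdot)\|_{L^2}^2$, gives the Pythagorean identity
$$\lim_{n\to\infty}\bigl\{\|\Delta w_n^{J-1}\|_{L^2}^2-\|\Delta\phi^J\|_{L^2}^2-\|\Delta w_n^J\|_{L^2}^2\bigr\}=0,$$
from which \eqref{energy-decoupling} follows by telescoping. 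Summing over $J$ yields $\sum_J\|\Delta\phi^J\|_{L^2}^2\leq\limsup_n\|\Delta u_n\|_{L^2}^2<\infty$, so $\|\Delta\phi^J\|_{L^2}\to 0$; the quantitative lower bound from the inverse principle then forces $\limsup_n\|e^{it\Delta^2}w_n^J\|_{B_0}\to 0$ as $J\to J^*$, establishing \eqref{vanishing}.

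The asymptotic orthogonality \eqref{orthogonality} I would prove by contradiction: failure of \eqref{orthogonality} for a pair $j<k$ means that, along a subsequence, the symmetry $(g_n^k)^{-1}\circ g_n^j\circ e^{i(t_n^j-t_n^k)\Delta^2}$ converges strongly on $\dot H^2$ to a nontrivial operator $\mathcal{S}$; propagating the weak convergence defining $\phi^j$ through $\mathcal{S}$ then produces a nonzero weak limit for the stage-$k$ renormalization of $w_n^{k-1}$, contradicting the construction that pins that weak limit to $\phi^k$ alone. A diagonal extraction finally normalizes $t_n^j$ (resp.\ $x_n^j$) to be either identically $0$ or divergent in $|\cdot|$, absorbing any finite limits into $\phi^j$ via $e^{it_\infty\Delta^2}$ or spatial translation.

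The genuinely new point is the potential decoupling \eqref{potential-decoupling} in the presence of the weight $|x|^{-b}$. Changing variables $y=(x-x_n^j)/\lambda_n^j$ and exploiting the critical relation $p=(8-2b)/(N-4)$, which exactly cancels all powers of $\lambda_n^j$, one computes
$$P\bigl(g_n^j[e^{it_n^j\Delta^2}\phi^j]\bigr)=\int_{\R^N}\bigl|y+x_n^j/\lambda_n^j\bigr|^{-b}\bigl|e^{it_n^j\Delta^2}\phi^j(y)\bigr|^{p+2}\,dy.$$
When $|x_n^j|/\lambda_n^j\to\infty$ the weight tends to $0$ off a set that escapes the bulk of $\phi^j$, so dominated convergence (with Hardy--Sobolev supplying a uniformly integrable majorant) drives $P\to 0$; when $x_n^j\equiv 0$ and $|t_n^j|\to\infty$, the dispersive decay of $e^{it\Delta^2}\phi^j$ in $L^{2N/(N-4)}$ again forces $P\to 0$; and when $x_n^j\equiv 0$, $t_n^j\equiv 0$ the quantity simply equals $P(\phi^j)$. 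The cross terms arising from $P\bigl(\sum_j g_n^j[e^{it_n^j\Delta^2}\phi^j]+w_n^J\bigr)$ are handled by a Brezis--Lieb-type decomposition in the weighted $L^{p+2}$ space: \eqref{orthogonality} separates profile concentration regions, and the remainder's contribution is absorbed via \eqref{vanishing} together with a Hardy--Sobolev interpolation. I expect this final cross-term analysis to be the main obstacle, since the weight $|x|^{-b}$ breaks translation invariance and the standard Brezis--Lieb/Rellich argument must be rerun carefully, distinguishing the subcases in which the centers $x_n^j$ sit near or far from the origin relative to the scales $\lambda_n^j$.
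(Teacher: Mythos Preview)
Your proposal is correct and follows precisely the standard Bahouri--G\'erard/Keraani scheme that the paper itself invokes; in fact the paper's proof is essentially a two-line deferral to \cite{Ker} for the extraction, orthogonality, and vanishing, and to \cite{CHL} (Lemma~4.1) for the decoupling, so you have supplied considerably more detail than the authors do. Your identification of the weighted potential decoupling \eqref{potential-decoupling} as the only genuinely new ingredient, together with the case split on $|x_n^j|/\lambda_n^j$ and $t_n^j$, is exactly the point the paper leaves implicit.
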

\begin{proof}
  We only show the energy decoupling \eqref{energy-decoupling}, the other case can be showed by same way as in \cite{Ker}. In fact, following the argument of Lemma 4.1 in \cite{CHL}, it is easily to get the energy decoupling.
\end{proof}

The following proposition serves as the first essential ingredient in the construction of the critical solution. It establishes the scattering of solutions to~\eqref{IBNLS} with initial data that are sufficiently localized away from the origin. This result plays a crucial role in extending the construction of a minimal blow-up solution from the radial to the non-radial setting. Furthermore, it guarantees that any compact solution must remain spatially localized near the origin, which is essential for applying the localized virial argument later in the proof.

\begin{proposition}[Nonlinear Profile Away from the Origin]\label{N-profile}
Let $\lambda_n \in (0, \infty)$, $x_n \in \mathbb{R}^N$, and $t_n \in \mathbb{R}$ satisfy
\[
\lim_{n \to \infty} \frac{|x_n|}{\lambda_n} = \infty, \quad t_n \equiv 0 \text{ or } t_n \to \pm \infty.
\]
Let $\phi \in \dot{H}^2$, and define
\[
\phi_n(x) = g_n\big[ e^{i t_n \Delta^2} \phi \big](x) = \lambda_n^{-\frac{N-4}{2}} \big(e^{i t_n \Delta^2} \phi\big)\left( \frac{x - x_n}{\lambda_n} \right).
\]
Then for all sufficiently large $n$, there exists a global solution $v_n$ to \eqref{IBNLS} satisfying
\[
v_n(0) = \phi_n, \quad \|v_n\|_{B(\mathbb{R})} \lesssim 1,
\]
with implicit constant depending only on $\|\phi\|_{\dot{H}^2}$.

Moreover, for any $\varepsilon > 0$, there exist $K \in \mathbb{N}$ and $\psi \in C_c^\infty(\mathbb{R} \times \mathbb{R}^N)$ such that for all $n \geq K$,
\[
\| v_n - g_n[\psi] \|_{B(\mathbb{R})} < \varepsilon.
\]
\end{proposition}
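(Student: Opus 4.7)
My strategy is to exploit the smallness of the weight $|x|^{-b}$ on the spatial region where the rescaled profile is concentrated, build an approximate (free) solution, and close via the long-time perturbation result Proposition~\ref{stability}. After a density/stability reduction to $\phi\in C_c^\infty(\R^N)$, I would set
\[
\tilde v_n(t,x) := \lambda_n^{-\frac{N-4}{2}}\bigl(e^{i(\lambda_n^{-4}t+t_n)\Delta^2}\phi\bigr)\!\Bigl(\tfrac{x-x_n}{\lambda_n}\Bigr),
\]
so that $\tilde v_n(0)=\phi_n$, $(i\partial_t+\Delta^2)\tilde v_n=0$, and Strichartz yields the uniform bound $\|\tilde v_n\|_{L_t^\infty\dot H_x^2}+\|\tilde v_n\|_{B(\R)}+\|\Delta\tilde v_n\|_{W(\R)}\lesssim\|\phi\|_{\dot H^2}$.

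The heart of the proof is to show $\|\mathcal E_n\|_{N(\R)}\to 0$ for the residual $\mathcal E_n := |x|^{-b}|\tilde v_n|^p\tilde v_n$. The criticality identity $p=(8-2b)/(N-4)$ makes the dual Strichartz norm $\|\nabla(\cdot)\|_{L_t^2 L_x^{2N/(N+2)}}$ scale-invariant under $x=\lambda_n y+x_n$, $t=\lambda_n^4 s$, reducing the task to
\[
\bigl\|\nabla_y\bigl(|y+a_n|^{-b}|W_n|^p W_n\bigr)\bigr\|_{L_s^2 L_y^{2N/(N+2)}}\to 0,\qquad W_n(s,y) := (e^{i(s+t_n)\Delta^2}\phi)(y),\ \ a_n := \tfrac{x_n}{\lambda_n},
\]
with $|a_n|\to\infty$. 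I would then split $y$-space into $\{|y+a_n|\geq|a_n|/2\}$, where the translated weight satisfies $|y+a_n|^{-b}\lesssim|a_n|^{-b}\to 0$, and its complement, where $|y|\geq|a_n|/2$ so that the truncation $W_n\chi_{|y|\geq|a_n|/2}$ has $B(\R)$- and $W(\R)$-norms tending to zero uniformly in $n$ by dominated convergence (these norms being invariant under time translation and hence independent of $t_n$). Applying a translated-weight variant of Lemma~\ref{Non-e1} to each piece delivers the smallness.

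Granted $\|\mathcal E_n\|_{N(\R)}\to 0$ and $\tilde v_n(0)=\phi_n$, Proposition~\ref{stability} produces for all large $n$ a unique global solution $v_n$ of \eqref{IBNLS} with $v_n(0)=\phi_n$ and $\|v_n-\tilde v_n\|_{B(\R)}\to 0$, whence $\|v_n\|_{B(\R)}\lesssim\|\phi\|_{\dot H^2}$. For the approximation by a fixed smooth profile, I extend $g_n$ to space-time functions by $g_n[\psi](t,x) := \lambda_n^{-(N-4)/2}\psi(\lambda_n^{-4}t+t_n,(x-x_n)/\lambda_n)$, under which $\tilde v_n=g_n[\Psi]$ for $\Psi(s,y):=(e^{is\Delta^2}\phi)(y)$, a function independent of $n$. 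Truncating $\Psi$ in $s$ (using $\|\Psi\|_{B(\R)}<\infty$) and in $y$ (using spatial decay of the free evolution of compactly supported data), then mollifying, produces $\psi\in C_c^\infty(\R\times\R^N)$ with $\|\Psi-\psi\|_{B(\R)}<\varepsilon/2$; combined with $\|v_n-\tilde v_n\|_{B(\R)}<\varepsilon/2$ for $n\geq K$ this yields $\|v_n-g_n[\psi]\|_{B(\R)}<\varepsilon$. The principal obstacle is the region-splitting estimate for $\mathcal E_n$: one must verify that the correct Strichartz norm is exactly scale-invariant (so that the $\lambda_n$-factors produced by rescaling the weight cancel), adapt Lemma~\ref{Non-e1} cleanly to the translated weight $|y+a_n|^{-b}$, and control the decay of $W_n$ on the far-field $\{|y|\geq|a_n|/2\}$ \emph{uniformly} in the time shift $t_n$.
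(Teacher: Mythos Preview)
Your approach is valid but takes a genuinely different route from the paper. The paper does \emph{not} reduce to $\phi\in C_c^\infty$; instead it builds the approximate solution from a frequency projection $P_n=P_{|x_n/\lambda_n|^{-\theta}\le\cdot\le|x_n/\lambda_n|^{\theta}}$ together with a spatial cutoff $\chi_n$ supported on $\{|y+a_n|\gtrsim|a_n|\}$, and works only on a rescaled finite time window $[-T,T]$ (extending by the free flow outside). The frequency cutoff allows Bernstein to trade explicit powers of $|a_n|^\theta$ for Sobolev regularity, and the finite window converts $L_t^2$ to $L_t^\infty$ at the price of a factor $T^{1/2}$; together these give the quantitative decay $T^{1/2}|a_n|^{-b+\theta(5-b)}$ for the nonlinear error. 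The cost is an additional \emph{linear} error from commutators of $\Delta^2$ with $\chi_n$, which must be estimated separately.

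Your free-evolution ansatz avoids the commutator error entirely, and the region splitting plus dominated convergence on the far-field piece is the right idea. One point deserves care, however: on the near-field $\{|y+a_n|\ge|a_n|/2\}$, after you pull out $|a_n|^{-b}$ you are left with the \emph{unweighted} quantity $\||W_n|^p\,TW_n\|_{L_t^2L_x^{2N/(N+2)}(\R\times\R^N)}$, and this is \emph{not} handled by a ``translated-weight variant of Lemma~\ref{Non-e1}''---that lemma carries the weight, and the weight is precisely what makes the exponents close at criticality. For $\phi\in\dot H^2$ only, this unweighted norm is off-scale by exactly $b$ and is generically infinite on $\R_t$. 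Your density reduction to $\phi\in C_c^\infty$ is what saves you here: the extra integrability and regularity of a Schwartz profile (dispersive decay, $L^2$-level Strichartz for $\nabla\phi$, Sobolev at any level) make the unweighted norm finite, with a constant depending on $\phi$ but not on $n$, so that $|a_n|^{-b}C(\phi)\to0$. So reserve the translated Lemma~\ref{Non-e1} for the far-field piece (applied with $f=g=W_n\tilde\chi$ for a smooth cutoff $\tilde\chi$ to $\{|y|\gtrsim|a_n|\}$), and treat the near-field piece by a direct unweighted estimate. With that correction your argument goes through, and is arguably more elementary than the paper's---no frequency localization, no linear commutator error---at the cost of the density step and constants that are not purely in terms of $\|\phi\|_{\dot H^2}$ until the final stability pass.
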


\begin{proof}
Let $\theta \in (0, 1)$ be a small parameter to be chosen later. Introduce the frequency cutoff operator
\[
P_n = P_{ \left| \frac{x_n}{\lambda_n} \right|^{-\theta} \leq \cdot \leq \left| \frac{x_n}{\lambda_n} \right|^{\theta} }
\]
and the spatial cutoff function $\chi_n$ satisfying
\[
\chi_n(x) =
\begin{cases}
1, & |x + \frac{x_n}{\lambda_n}| \geq \frac{1}{2} \left| \frac{x_n}{\lambda_n} \right|, \\
0, & |x + \frac{x_n}{\lambda_n}| \leq \frac{1}{4} \left| \frac{x_n}{\lambda_n} \right|.
\end{cases}
\]
Additionally, $\chi_n$ obeys the bounds
\[
|\partial^\alpha \chi_n| \lesssim \left| \frac{x_n}{\lambda_n} \right|^{-|\alpha|}
\]
for all multi-indices $\alpha$, and $\chi_n \to 1$ pointwise as $n \to \infty$.

\medskip
Define the approximate solution $v_{n,T}$ as follows. Let
\[
I_{n,T}:=[a^-_{n,T},a^+_{n,T}] := [-\lambda_n^4 t_n - \lambda_n^4 T, \, -\lambda_n^4 t_n + \lambda_n^4 T]
\]
and set
\begin{align*}
    v_{n,T}(t)&=g_n[\chi_nP_ne^{i(\lambda_n^{-4}t+t_n)\Delta^2}\phi]\\
    &=\chi_n(\frac{x-x_n}{\lambda_n})e^{i(t+\lambda_n^4t_n)\Delta^2}g_n[P_n\phi].
  \end{align*}
For times $t$ outside $I_{n,T}$, extend $v_{n,T}$ linearly via
\[
v_{n,T}(t) =
\begin{cases}
e^{i (t - a_{n,T}^+) \Delta^2} v_{n,T}(a_{n,T}^+), & t > a_{n,T}^+, \\
e^{i (t - a_{n,T}^-) \Delta^2} v_{n,T}(a_{n,T}^-), & t < a_{n,T}^-.
\end{cases}
\]

\medskip
\textbf{Step 1: Uniform Bounds.} We claim
\[
\limsup_{T \to \infty} \limsup_{n \to \infty} \Big( \|v_{n,T}\|_{L_t^\infty \dot{H}^2} + \|v_{n,T}\|_{B(\mathbb{R})} \Big) \lesssim 1.
\]
This holds by standard Strichartz estimates combined with Sobolev embeddings. The key is the uniform bounds for the cutoff functions:
\[
\|\chi_n\|_{L^\infty} + \|\nabla \chi_n\|_{L^N} + \|D^2 \chi_n\|_{L^{N/2}} \lesssim 1.
\]
A standard computation shows
\begin{align*}
\|v_{n,T}\|_{L_t^\infty\dot{H}^2(\R\times\R^N)}\lesssim& \|\Delta(P_ne^{i(\lambda_n^{-4}t+t_n)\Delta^2}\phi)\|_{L^2} +\|\nabla(P_ne^{i(\lambda_n^{-4}t+t_n)\Delta^2}\phi)\|_{L^{\frac{2N}{N-2}}}\\
&+\|P_ne^{i(\lambda_n^{-4}t+t_n)\Delta^2}\phi\|_{L^{\frac{2N}{N-4}}}\lesssim1
  \end{align*}
and
$$\|v_{n,T}\|_{B(\R)}\lesssim1.$$
With the $L_t^\infty\dot{H}^2$ bound in place on $I_{n,T}$, the desired bounds $I_{n,T}^{\pm}$ follow from Sobolev embedding and Strichartz.

\medskip
\textbf{Step 2: Approximation at time zero.} We verify that
\[
\lim_{T \to \infty} \limsup_{n \to \infty} \|v_{n,T}(0) - \phi_n\|_{\dot{H}^2} = 0.
\]
If $t_n \equiv 0$, this follows directly from
\[
\|v_{n,T}(0) - \phi_n\|_{\dot{H}^2} = \| ( \chi_n P_n - 1 ) \phi \|_{\dot{H}^2} \to 0.
\]
If $t_n \to \pm \infty$, then
\[
v_{n,T}(0) = g_n e^{i t_n \Delta^2} \left[ e^{-iT\Delta^2} \chi_n P_n e^{iT\Delta^2} \phi \right],
\]
so that
\[
\|v_{n,T}(0) - \phi_n\|_{\dot{H}^2} = \| ( \chi_n P_n - 1 ) e^{iT\Delta^2} \phi \|_{\dot{H}^2} \to 0.
\]

\medskip
\textbf{Step 3: Error Control.} Define the error
\[
e_{n,T} := (i\partial_t + \Delta^2) v_{n,T} - |x|^{-b} |v_{n,T}|^p v_{n,T}.
\]
We claim that
\[
\lim_{T \to \infty} \limsup_{n \to \infty} \|e_{n,T}\|_{N(I)} = 0.
\]

We first analyze the interval $I_{n,T}$. Decompose $e_{n,T} = e_{n,T}^{\mathrm{lin}} + e_{n,T}^{\mathrm{nl}},$ where
\[
e_{n,T}^{\mathrm{lin}} = \Delta^2 \Big( \chi_n\Big( \frac{x - x_n}{\lambda_n} \Big) e^{i(t + \lambda_n^4 t_n)\Delta^2} g_n [P_n \phi] \Big)- \chi_n\Big( \frac{x - x_n}{\lambda_n} \Big) e^{i(t + \lambda_n^4 t_n)\Delta^2} \Delta^2 g_n [P_n \phi] 
\]
and
\[
e_{n,T}^{\mathrm{nl}} = \lambda_n^{-\frac{(N-4)p}{2}} g_n \Big\{ |\lambda_n x + x_n|^{-b} \chi_n^{p+1} |\Phi_n|^p \Phi_n \Big\},
\]
with $\Phi_n(t,x) = P_n e^{i(\lambda_n^{-4}t + t_n)\Delta^2} \phi$.

This produces a sum of terms of the form
\[
\partial^j \Big[\chi_n\Big( \frac{x - x_n}{\lambda_n} \Big)\Big] e^{i(t + \lambda_n^4 t_n)\Delta^2} \partial^{6-j} [g_n P_n \phi],\;\;for \;\; j \in \{1,2,3,4,5,6\}.
\]

We estimate each such term in $L_t^1 L_x^2 (I_{n,T} \times \mathbb{R}^N)$ as follows. By H\"older's inequality and Bernstein's inequality,
\begin{align*}
\Big\| \partial^j &\Big[\chi_n\Big( \frac{x - x_n}{\lambda_n} \Big)\Big] e^{i(t + \lambda_n^4 t_n)\Delta^2} \partial^{6-j} [g_n P_n \phi] \Big\|_{L_t^1 L_x^2 (I_{n,T} \times \mathbb{R}^N)} \\
&\lesssim |I_{n,T}| \Big\| \partial^j \Big[\chi_n\Big( \frac{x - x_n}{\lambda_n} \Big)\Big] \Big\|_{L_x^\infty} \Big\| \partial^{6-j} [g_n P_n \phi] \Big\|_{L_t^\infty L_x^2} \\
&\lesssim T \Big|\frac{x_n}{\lambda_n}\Big|^{-j} \Big\| \partial^{6-j} P_n \phi \Big\|_{L_t^\infty L_x^2} \\
&\lesssim T \Big|\frac{x_n}{\lambda_n}\Big|^{-j} \Big|\frac{x_n}{\lambda_n}\Big|^{|4-j|\theta} \to 0
\end{align*}
as $n \to \infty$ for $\theta$ sufficiently small.

\medskip

To estimate $e_{n,T}^{\mathrm{nl}}$ on $I_{n,T}$, we apply H\"older's inequality:
\[
\|\nabla e_{n,T}^{\mathrm{nl}}\|_{L_t^{2} L_x^{\frac{2N}{N+2}}} \leq \lambda_n^{b} T^{1/2} \Big\| \nabla \Big[ |\lambda_n x + x_n|^{-b} \chi_n^{p+1} |\Phi_n|^p \Phi_n \Big] \Big\|_{L_t^\infty L_x^{\frac{2N}{N+2}}}.
\]
Observe that
\[
\Big\| \partial^j \Big( |\lambda_n x + x_n|^{-b} \chi_n^{p-1} \Big) \Big\|_{L_x^\infty} \lesssim \Big|\frac{x_n}{\lambda_n}\Big|^{-j} |x_n|^{-b}, \quad j \in \{0,1\}.
\]
Using H\"older's inequality, Sobolev embedding, and Bernstein, we estimate
\begin{align*}
\|\nabla e_{n,T}^{\mathrm{nl}}\|_{L_t^{2} L_x^{\frac{2N}{N+2}}} \lesssim& T^{1/2} \Big|\frac{x_n}{\lambda_n}\Big|^{-b} \|\Phi_n\|_{L_t^\infty L_x^{Np}}^{p} \sum_{j=0}^1 \Big|\frac{x_n}{\lambda_n}\Big|^{-j} \|\partial^{1-j} \Phi_n\|_{L_t^\infty L_x^2} \\
\lesssim& T^{1/2} \Big|\frac{x_n}{\lambda_n}\Big|^{-b}\Big|\frac{x_n}{\lambda_n}\Big|^{\theta(3-b)}\Big|\frac{x_n}{\lambda_n}\Big|^{\theta(1+j)} \Big|\frac{x_n}{\lambda_n}\Big|^{-j} \\
\lesssim& T^{1/2} \Big|\frac{x_n}{\lambda_n}\Big|^{-b + \theta (5-b)} \to 0
\end{align*}
as $n \to \infty$ for $\theta$ sufficiently small.

\medskip

Finally, the error estimate on the intervals $I_{n,T}^\pm$ is straightforward by Sobolev embedding and linear Strichartz estimates, and is omitted.

\medskip
\textbf{Conclusion.} Applying Proposition \ref{stability}, there exist solutions $v_n$ with initial data $\Phi_n$, and these solutions have the following estimates
\begin{align}\label{Sca-bound}
  \|v_n\|_{B(\R)}\leq C\|\phi\|_{\dot{H}^2}\ \textnormal{and}\ \limsup_{T\to\infty}\lim_{n\to\infty}\|v_n-v_{n,T}\|_{B(\R)}=0.
\end{align}

With \eqref{Sca-bound} in place, we can adapt the same arguments from \cite{KMVZZ} to prove the approximation by $C_c^\infty$ functions. Here, we omit the proof.
\end{proof}

Next, we turn to the main ingredient of our work: the Palais–Smale condition. The proof is technical, combining the profile decomposition with the previous proposition to construct the associated nonlinear profiles. Two additional lemmas are crucial to complete the argument: the existence of a bad profile and the decoupling of kinetic energy.

\begin{proposition}[\textbf{Palais--Smale Condition}]\label{PS}
Let $u_n: I_n\times \R^N \rightarrow \C$ be a sequence of maximal lifespan solutions to \eqref{IBNLS} and let $t_n \in I_n$. Suppose that
\begin{equation}\label{Hyp1}
\lim_{n\rightarrow\infty} \sup_{t\in I_n} \|u_n(t)\|^2_{\dot{H}^2} = K_c < \|W\|^2_{\dot{H}^2}
\end{equation}
and
\[
\lim_{n\rightarrow \infty} \|u_n\|_{B(t\geq t_n)} = \lim_{n\rightarrow \infty} \|u_n\|_{B(t\leq t_n)} = \infty,
\]
then there exists $\{\lambda_n\} \subset \R^{+}$ such that $\{\lambda_n^{-\frac{N-4}{2}} u_n(t_n, \tfrac{x}{\lambda_n})\}$ is precompact in $\dot{H}^2$.
\end{proposition}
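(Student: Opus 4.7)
The plan is to follow the Kenig--Merle concentration--compactness strategy, adapted to the non-radial inhomogeneous biharmonic setting: Proposition \ref{N-profile} eliminates every profile whose centre escapes to infinity, while the minimality of $K_c$ forces exactly one profile concentrated at the origin at time zero.

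First I would apply the linear profile decomposition (Proposition \ref{Linear-Profile}) to the bounded sequence $\{u_n(t_n)\}\subset\dot H^2$, writing
\[
u_n(t_n)=\sum_{j=1}^{J}g_n^j\bigl[e^{it_n^j\Delta^2}\phi^j\bigr]+w_n^J,
\]
with the orthogonality \eqref{orthogonality}, the kinetic-energy decoupling \eqref{energy-decoupling}, and the asymptotic vanishing \eqref{vanishing} of $e^{it\Delta^2}w_n^J$ in the Strichartz norm. For each profile I would build a nonlinear profile: when $|x_n^j|/\lambda_n^j\to\infty$, Proposition \ref{N-profile} supplies a global scattering evolution $v_n^j$ with $\|v_n^j\|_{B(\R)}\lesssim_{\|\phi^j\|_{\dot H^2}}1$; when $x_n^j\equiv 0$, I would solve \eqref{IBNLS} with datum $\phi^j$ at $t=0$ (if $t_n^j\equiv 0$) or via the wave operator (if $t_n^j\to\pm\infty$), obtaining $v^j$ of maximal lifespan, and then set $v_n^j(t):=g_n^j[v^j(\lambda_n^{-4}t+t_n^j)]$.

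Arguing by contradiction, suppose every nonlinear profile scatters with uniform $B$-bound. Gluing them,
\[
\tilde u_n^J(t):=\sum_{j=1}^{J}v_n^j(t)+e^{i(t-t_n)\Delta^2}w_n^J,
\]
defines an approximate solution of \eqref{IBNLS}; the parameter orthogonality \eqref{orthogonality}, Lemma \ref{Non-e1}, and the Strichartz smallness of the remainder force both the nonlinear error in $N(I_n)$ and the initial-data mismatch in $B_2$ to be arbitrarily small for $n$ large. Proposition \ref{stability} would then yield $\|u_n\|_{B(I_n)}\lesssim 1$, contradicting the blowup of the scattering norm on both sides of $t_n$. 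Therefore there exists a bad profile, say $\phi^1$; Proposition \ref{N-profile} immediately rules out $|x_n^1|/\lambda_n^1\to\infty$, so $x_n^1\equiv 0$.

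The main obstacle, and the crux of the argument, is a dynamical propagation of the kinetic-energy decoupling needed to show that this bad profile is unique and that $t_n^1\equiv 0$. Pulling $u_n$ back by $(g_n^1)^{-1}$ at a time $s_n$ where $\|\Delta v^1\|_{L^2}$ nearly attains its supremum and applying Proposition \ref{Linear-Profile} together with \eqref{energy-decoupling} and the nonlinear proximity of $u_n$ to $\tilde u_n^J$ via Proposition \ref{stability}, one obtains
\[
\sup_{s}\|\Delta v^1(s)\|_{L^2}^2+\sum_{j\geq 2}\|\Delta\phi^j\|_{L^2}^2+\limsup_{n\to\infty}\|\Delta w_n^J\|_{L^2}^2\leq K_c.
\]
Any nontrivial $\phi^j$ with $j\geq 2$ would then force $\sup_s\|\Delta v^1(s)\|_{L^2}^2<K_c$, so by the very definition of $K_c$ the evolution $v^1$ would have to scatter, contradicting its status as the bad profile. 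Hence $J^\ast=1$ and $\|w_n^J\|_{\dot H^2}\to 0$ by \eqref{energy-decoupling}. Moreover $t_n^1\equiv 0$: if $t_n^1\to+\infty$ then $\|e^{it\Delta^2}g_n^1[e^{it_n^1\Delta^2}\phi^1]\|_{B([0,\infty))}=\|e^{is\Delta^2}\phi^1\|_{B([t_n^1,\infty))}\to 0$ by the global Strichartz integrability of $e^{is\Delta^2}\phi^1$, so Proposition \ref{WP}(ii) combined with Proposition \ref{stability} would give a uniform bound on $\|u_n\|_{B([t_n,\infty))}$, contradicting the blowup on the right; the case $t_n^1\to-\infty$ is symmetric. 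Consequently $u_n(t_n)-g_n^1[\phi^1]\to 0$ in $\dot H^2$, and setting $\lambda_n:=1/\lambda_n^1$ the rescaled sequence $\lambda_n^{-(N-4)/2}u_n(t_n,\cdot/\lambda_n)\to \phi^1$ strongly in $\dot H^2$, which yields the claimed precompactness.
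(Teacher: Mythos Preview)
Your proposal follows the same Kenig--Merle route as the paper, and all the surrounding pieces (profile decomposition, Proposition~\ref{N-profile} for escaping centres, stability to glue, ruling out $t_n^1\to\pm\infty$) are handled correctly.

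The one place where your sketch is genuinely looser than the paper is the step you flag as the crux, the ``dynamical propagation of the kinetic-energy decoupling.'' Your description --- pull $u_n$ back by $(g_n^1)^{-1}$ at a well-chosen time $s_n$ and re-apply Proposition~\ref{Linear-Profile} --- is intuition rather than a mechanism: the linear profile decomposition is a statement about a single bounded sequence in $\dot H^2$, not about the nonlinear evolutions, and re-running it at $s_n$ does not directly produce your displayed inequality. Moreover, what actually decouples along the flow are the $\|\Delta v_n^j(t)\|_{L^2}^2$, not the $\|\Delta\phi^j\|_{L^2}^2$ you wrote (the substitution is harmless for the eventual contradiction, but it is not what the argument yields). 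The paper makes this step precise via two ingredients you should spell out: an interval family $I_n^m$ on which \emph{every} nonlinear profile has $B$-norm at most $m$, so that Proposition~\ref{stability} gives $u_n\approx u_n^J$ in $L_t^\infty\dot H_x^2$ uniformly on $I_n^m$; and a separate kinetic-energy decoupling lemma (Lemma~\ref{KED}) asserting
\[
\limsup_{n\to\infty}\sup_{t\in I_n^m}\Bigl|\|\Delta u_n^J(t)\|_{L^2}^2-\sum_{j=1}^J\|\Delta v_n^j(t)\|_{L^2}^2-\|\Delta w_n^J\|_{L^2}^2\Bigr|=0,
\]
proved directly from the parameter orthogonality~\eqref{orthogonality}. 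Combined with a pigeonhole selection of the ``worst'' bad profile on $I_n^m$, this yields $\limsup_{m}\sup_{t\in I_n^m}\|\Delta v_n^1(t)\|_{L^2}^2\geq K_c$ and hence kills the other profiles and the remainder exactly as you intend.
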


\begin{proof}
By the time translation invariance, we may assume that $t_n \equiv 0$, so that
\[
\lim_{n\rightarrow \infty} \|u_n\|_{S(t\geq 0)} = \lim_{n\rightarrow \infty} \|u_n\|_{S(t\leq 0)} = \infty.
\]
Since the initial data \( u_{n,0} = u_n(0) \) is uniformly bounded by~\eqref{Hyp1}, we invoke the linear profile decomposition, possibly after extracting a subsequence:
\[
u_n(0) = \sum_{j=1}^{J} g_n^j [e^{it_n^j\Delta^2} \phi^j] + w_n^J.
\]

We now turn to the construction of nonlinear profiles solving \eqref{IBNLS}. Fix $j$. If $\big|\tfrac{x_n^j}{\lambda_n^j}\big| \to \infty$ (possibly along a subsequence), then by Proposition~\ref{N-profile}, there exists a global-in-time solution $v_n^j$ to \eqref{IBNLS} with initial data given by $g_n^j [e^{it_n^j\Delta^2} \phi^j]$, and satisfying $\|v_n^j\|_{B(\R)} < \infty$. On the other hand, if $\big|\tfrac{x_n^j}{\lambda_n^j}\big|$ converges to a finite value, we may assume, without loss of generality, that $x_n^j \equiv 0$. In this setting, following the argument in~\cite{KM}, we construct the nonlinear profile $v^j$ associated with the pair $(\phi^j,{t_n^j})$ in such a way that
\begin{equation}\label{PS00}
\|v^j(t_n^j) - e^{it_n^j\Delta^2} \phi^j\|_{\dot{H}^2} \to 0 \quad \text{as } n \to \infty.
\end{equation}
In that case, define
\[
v^j_n(t, x) = \lambda_n^{-\frac{N-4}{2}} v^j\left( \tfrac{t}{(\lambda_n^j)^4} + t_n^j, \tfrac{x}{\lambda_n^j} \right).
\]
Note that $v_n^j(0) = g_n^j v^j(t_n^j)$.

Moreover, the combination of relation~\eqref{energy-decoupling} and small data theory ensures the existence of $J_0 \geq 1$ such that, for all $j \geq J_0$, the norm $\|\phi^j\|_{\dot{H}^2}$ is small and the corresponding solutions $v_n^j$ are global, satisfying
\begin{equation}\label{PSC1}
\|v_n^j\|_{L^\infty \dot{H}^2(\R)} + \|v_n^j\|_{B(\R)} \lesssim \|\phi^j\|_{\dot{H}^2}.
\end{equation}
Note that for $j < J_0$, there exists at least one bad nonlinear profile such that
\begin{equation}\label{PSC2}
\|v_n^j\|^2_{L^\infty_t \dot{H}^2(I_n^j)} \geq K_c.
\end{equation}
In fact, if every $j$, the inequality $\|v_n^j\|^2_{L^\infty_t \dot{H}^2(I_n^j)} < K_c$ holds, then by the definition of $K_c$, each $v_n^j$ would be global and $\|v_n^j\|_{B(\R)}$ would remain bounded, contradicting the lemma stated below (which will be proven afterward).

\begin{lemma}[\textbf{At least one bad profile}]\label{bp}
There exists $1 \leq j_0 < J_0$ such that
\begin{equation}\label{bp1}
\| v_n^{j_0} \|_{B([0,T_{n,j_0}))} = \infty.
\end{equation}
\end{lemma}

Now, after reordering the indices, we may assume there exists $1 \leq J_1 < J_0$ such that
\[
\limsup_{n \to \infty} \|v_n^j\|_{B([0,T_n^j))} = \infty \quad \text{for } 1 \leq j \leq J_1,
\]
and
\[
\limsup_{n \to \infty} \|v_n^j\|_{B([0,\infty))} < \infty \quad \text{for } j > J_1.
\]

For each $m, n \geq 1$, define $k_{n,m} \in \{1, \dots, J_1\}$ and interval $I_n^m = [0,s]$ by
\[
\sup_{1 \leq j \leq J_1} \|v_n^j\|_{B(I_n^m)} = \|v_n^{k_{n,m}}\|_{B(I_n^m)} = m.
\]
By the pigeonhole principle, there must exist $j_1$ such that $k_{n,m} = j_1$ for infinitely many $n$. Reordering, we assume $j_1 = 1$, hence
\[
\limsup_{n,m \to \infty} \|v_n^1\|_{B_0(I_n^m)} = \infty,
\]
which implies, using~\eqref{PSC2},
\begin{equation}\label{PSC3}
\limsup_{n,m \to \infty} \sup_{t \in I_n^m} \|\Delta v_n^1(t)\|^2_{L^2} \geq K_c.
\end{equation}

Since all $v_n^j$ have finite scattering size on $I_n^m$ for each $m \geq 1$, we obtain the same approximation:
\[
\lim_{J \to \infty} \limsup_{n \to \infty} \|u_n^J - u_n\|_{L^\infty_t \dot{H}^2_x(I_n^m)} = 0.
\]

To conclude the proof, we use the following lemma (proved later):

\begin{lemma}[\textbf{Kinetic energy decoupling for $u_n^J$}]\label{KED}
For all $J \geq 1$ and $m \geq 1$,
\[
\limsup_{n \to \infty} \sup_{t \in I_n^m} \left| \|\Delta u_n^J(t)\|^2_{L^2} - \sum_{j=1}^J \|\Delta v_n^j(t)\|^2_{L^2} - \|\Delta w_n^J\|^2_{L^2} \right| = 0.
\]
\end{lemma}

Applying the lemma above, \eqref{Hyp1} and \eqref{PSC3}, we get
\[
K_c \geq \limsup_{n \to \infty} \sup_{t \in I_n^m} \|\Delta u_n^J(t)\|^2_{L^2}
= \lim_{J \to \infty} \limsup_{n \to \infty} \left\{ \|\Delta w_n^J\|^2_{L^2} + \sup_{t \in I_n^m} \sum_{j=1}^J \|\Delta v_n^j(t)\|^2_{L^2} \right\}.
\]
Thus, relation~\eqref{PSC3} implies that $J_1 = 1$, $v_n^j \equiv 0$ for $j \geq 2$, and $w_n^1$ converges strongly to $0$ in $\dot{H}^2$, so
\[
u_n(0) = g_n^1[e^{it_n^1\Delta^2} \phi^1] + w_n^1.
\]

To show that $u_n(0)$ is precompact in $\dot{H}^2$, we must show that the space-time parameters satisfy $(t_n^1, x_n^1) \equiv (0,0)$, i.e.,
\[
u_n(0) - g_n^1(\phi^1) \to 0 \quad \text{in } \dot{H}^2.
\]

Suppose $x_n^1 \not\equiv 0$. If $|\tfrac{x_n^1}{\lambda_n^1}| \to \infty$, then by Proposition~\ref{N-profile}, there exists a global solution $v_n^1$ with
\[
v_n^1(0) = g_n^1[e^{it_n \Delta^2} \phi^1], \quad \|v_n^1\|_{B_0(\R)} < \infty.
\]
Then, by stability (Proposition~\ref{stability}), $\|u_n\|_{S(\R)} < \infty$, a contradiction.

Suppose $t_n^1 \to \infty$. By Strichartz and monotone convergence,
\[
\|e^{it\Delta^2} u_n(0)\|_{B([0,\infty))} \lesssim \|e^{it\Delta^2} \phi^1\|_{B([t_n^1,\infty))} + \|e^{it\Delta^2} w_n^1\|_{B([0,\infty))},
\]
which is small for large $n$, so by small data theory, $\|u_n\|_{B([0,\infty))}$ is bounded, contradiction. Similarly, we get a contradiction if $t_n^1 \to -\infty$.
\end{proof}

\subsection{Proofs of lemmas}
In this subsection, we show the technical lemmas used above.


\begin{proof}[\bf Proof of Lemma \ref{bp}] We show that there exists $1 \leq j_0 < J_0$ such that $\| v_n^{j_0}\|_{B([0,T_{n,j_0}))} = \infty$. Indeed, The proof follows by contradiction: if \eqref{bp1} does not hold, then $T_{n,j_0}= \infty$ and for all $1 \leq j < J_0$,
\begin{equation}\label{bp00}
\limsup_{n \rightarrow \infty} \| v_n^{j}\|_{B([0,\infty))}
< \infty.
\end{equation}
Subdividing $[0, \infty)$ into intervals where the scattering size of $v^j_n$ is small, applying the Strichartz inequality on each such interval, and then summing,
we obtain
\begin{equation}\label{bp01}
\|\Delta v_n^{j}\|_{W([0,\infty))}<\infty.
\end{equation}
Combining \eqref{PSC1}, \eqref{bp00} and \eqref{Hyp1}, we deduce (for $n$ sufficiently large)
\begin{equation*}
   \sum_{j\geq 1} \| v_n^{j}\|_{B([0,\infty))}\lesssim 1+ \sum_{j\geq J_0}\|\Delta \phi^j\|_{L^2}^2\lesssim 1+K_c.
\end{equation*}
From these assumptions, we'll establish a bound on the forward-in-time scattering size of $u_n$, thus leading to a contradiction. Indeed, define
\[
u_n^{J} = \sum_{j=1}^{J} v_n^j+e^{it\Delta^2}w_n^J.
\]
If the following conditions hold:
\begin{align}
& \limsup_{n\to\infty}\bigl\{ \|u_n^{J}\|_{L_t^\infty \dot H_x^2} + \|u_n^{J}\|_{B(\R)}\bigr\}\lesssim 1, \label{unJbds} \\
&\limsup_{n\to\infty}\|u_n(0)-u_n^{J}(0)\|_{\dot{H}^2}=0,\label{unJvan}\\
& \limsup_{n\to\infty} \|\nabla[(i\partial_t + \Delta^2)u_n^{J} - |x|^{-b}|u_n^{J}|^{p} u_n^{J}] \|_{L_t^2 L_x^{\frac{2N}{N+2}}}\lesssim \varepsilon, \label{unJapprox}
\end{align}
then the stability result (Proposition \ref{stability}) shows that $\|u_n\|_{B([0,\infty))}<\infty$, which contradicts the fact that $\|u_n\|_{B\left([0,\infty)\right)}\to \infty$ as $n\to\infty$.

We first show \eqref{unJbds}. Note that
\begin{align*}
\Big|\sum_{j=1}^Jv_n^j\Big|^2=\sum_{j=1}^J|v_n^j|^2+\sum_{j\neq k}v_n^jv_n^k,
\end{align*}
then taking the $L_{t}^{\frac{N+4}{N-4}}L_x^{\frac{N+4}{N-4}}$ norm in both side, we can get
\begin{align*}
 \Big\|(\sum_{j=1}^{J}v_n^j)^2\Big\|_{L_{t}^{\frac{N+4}{N-4}}L_x^{\frac{N+4}{N-4}}}\leq
\sum_{j=1}^{J}\|v_n^j\|_{B([0,\infty))}^2+\sum_{j\neq k}\|v_n^jv_n^k\|_{L_{t}^{\frac{N+4}{N-4}}L_x^{\frac{N+4}{N-4}}}.
\end{align*}
Since $\lim_{n\to\infty}\|v_n^jv_n^k\|_{L_{t}^{\frac{N+4}{N-4}}L_x^{\frac{N+4}{N-4}}}=0$ (by the orthogonality conditions \eqref{orthogonality}) we have
\begin{align*}
\limsup_{n\to\infty}\|u_n^J\|_{B([0,\infty)}\leq \limsup_{n\to\infty}\sum_{j=1}^J\|v_n^{j}\|_{B([0,\infty))}\lesssim 1+K_c,
\end{align*}
which is independent of $J$. Moreover, $\|e^{it\Delta^2}w_n^J\|_{B\left([0,\infty)\right)}\lesssim \|w_n^J\|_{\dot{H}^2}$ is finite. Therefore, the first estimate holds. In the same way (using \eqref{bp01}), we also obtain that $\|u_n^J\|_{L^\infty \dot{H}^2}$ is bounded.

We now consider \eqref{unJvan}. Using the fact that $v_n^j(0)=g_n^jv^j(t_n^j)$
$$
\|u_{0,n}-u_{n}^{J}(0)\|_{\dot{H}^1}\lesssim\sum_{j=1}^J \left\|\left( g_n^j[e^{it^j_n\Delta^2}\phi^j]-g_n^jv^j(t_n^j)\right)\right\|_{\dot{H}^2}\lesssim \sum_{j=1}^J \| e^{it^j_n\Delta^2}\phi^j-v^j(t_n^j)\|_{\dot{H}^2},
$$
which goes to zero, as $n\rightarrow \infty$, by \eqref{PS00}.

\ Finally, we turn to show \eqref{unJapprox}. To this end, we write $f(z)=|z|^{p} z$ and observe
\begin{align}\label{enJ1}
e_n=(i\partial_t + \Delta)u_n^J -|x|^{-b} f(u_n^J)=&|x|^{-b}\left[f\bigl(\sum v_n^j\bigr) - \sum f(v_n^j)\right].
\end{align}
Given \eqref{SECONDEI}, to estimate $\nabla e_n$, it is sufficient to estimate terms of the following types:
\begin{itemize}
\item[(1)] $ |x|^{-b} |v_n^j|^{p-1}\cdot v_n^\ell \cdot T v_n^k $,\quad $T\in\{|x|^{-1},\nabla\}$,
\item[(2)] $ |x|^{-b} |e^{it\Delta^2}w_n^{J}|^{p} \cdot T e^{it\Delta^2}w_n^{J} $,\quad $T\in\{|x|^{-1},\nabla\}$,
\item[(3)] $ |x|^{-b} |e^{it\Delta^2}w_n^{J}|^{p} \cdot \nabla u_n^{J}$,
\item[(4)] $ |x|^{-b} |u_n^{J}|^{p-1}\cdot e^{it\Delta^2}w_n^{J} \cdot \nabla u_n^{J}$,
\item[(5)] $ |x|^{-b} |u_n^{J}|^{p} \cdot T e^{it\Delta^2}w_n^{J} $,\quad $T\in\{|x|^{-1},\nabla\}$,
\end{itemize}
where we have $j\neq k$ and $\ell\in\{1,\dots,J\}$.  The first term also involves a constant $C_{J}$ that grows with $J$.  However, we will shortly see that for each fixed $J$, these terms tend to zero in $L_t^2 L_x^{\frac{2N}{N+2}}$ as $n\to\infty$, so that this constant is ultimately harmless.

Let $q_0, r_0, \bar{r}, \beta$ denote by
\begin{align*}
  q_0=\frac{2(N+4)(b+1)}{b(N-2)+N-4},\ r_0=\frac{2N(N+4)(b+1)}{N^2+b(N^2+8)+16}\ \text{and}\ \beta=\frac{Nr_0}{N-r_0}.
\end{align*}
By Lemma~\ref{Non-e1}, it follows that

\begin{align*}
 \|(1)\|_{L_t^2 L_x^{\frac{2N}{N+2}}}\lesssim& \||x|^{-1}v_n^j\|_{L_t^{q_0}L_x^{\beta}}^b\||v_n^j|^{p-b-1}Tv_n^k\|_{L_t^{q_1}L_x^{r_1}}\|v_n^l\|_{L_t^{\frac{2(N+4)}{N-4}}L_x^{\frac{2(N+4)}{N-4}}}\\
 \lesssim&\|\Delta v_n^j\|_{L_t^{q_0}L_x^{r_0}}\||v_n^j|^{p-b-1}Tv_n^k\|_{L_t^{q_1}L_x^{r_1}}\|v_n^l\|_{B([0,\infty))},
\end{align*}
where $(q_1,r_1)$ satisfies
$$\frac{1}{q_1}=\frac1{q_0}+\frac{(N-4)(p-b-1)}{2(N+4)},\ \ \frac{1}{r_1}=\frac1{\beta}+\frac{(N-4)(p-b-1)}{2(N+4)}$$
and
$$1=\frac{N}{r_0}-\frac{N}{\beta}.$$
As the norms $\|v_n^\ell\|_{B([0,\infty))}$ and $\|\nabla v_n^j\|_{L_t^{q_0}L_x^{r_0}}$ are bounded. It suffices to show that
\begin{align}\label{np-decouple}
  \lim_{n\to\infty}\||v_n^k|^{p-b-1}Tv_n^j\|_{L_t^{q_1}L_x^{r_1}}=0\ \ \text{for}\ \ j\neq k.
\end{align}
In fact, it follows from approximation by functions in $C_c^\infty(\R\times\R^N)$ and the use of the orthogonality conditions \eqref{orthogonality} (see e.g. \cite{Ker} or \cite[Lemma~7.3]{Visan}).

 Next, we turn to estimate terms $(2),(3), (4)$. Choosing $J>J'$ large enough such that
 $$\lim_{n\to\infty}\|e^{it\Delta^2}w_n^{J}\|_{B\left([0,\infty)\right)}<\frac{\varepsilon}{J'}.$$
 By the lemma \ref{Non-e1}, we estimate
 \begin{align*}
\|(2)\|_{L_t^2 L_x^{\frac{2N}{N+2}}}\lesssim
\|e^{it\Delta^2}w_n^{J}\|_{B\left([0,\infty)\right)}^{p-b}\|\Delta e^{it\Delta^2}w_n^{J}\|_{W([0,\infty))}^{1+b},
\end{align*}
 $$
 \|(3)\|_{L_t^2 L_x^{\frac{2N}{N+2}}}\lesssim \|e^{it\Delta^2}w_n^{J}\|_{B\left([0,\infty)\right)}^{p-b}\|\Delta u_n^{J}\|_{W([0,\infty)}\|\nabla e^{it\Delta^2}w_n^{J}\|_{W([0,\infty))}^{b}
 $$
 and
  $$
 \|(4)\|_{L_t^2 L_x^{\frac{2N}{N+2}}}\lesssim \|u_n^J\|_{B([0,\infty))}^{p-b-1} \|e^{it\Delta^2}w_n^{J}\|_{B\left([0,\infty)\right)}\|\Delta u_n^{J}\|_{W([0,\infty)}^{b+1}.
 $$
Together with vanishing condition \eqref{vanishing} and $\|w_n^{J}\|_{\dot H^1}\lesssim1$, we have
$$\limsup_{n\to\infty}\big(\|(2)\|_{L_t^2 L_x^{\frac{2N}{N+2}}}+\|(3)\|_{L_t^2 L_x^{\frac{2N}{N+2}}}+\|(4)\|_{L_t^2 L_x^{\frac{2N}{N+2}}}\big)\leq \varepsilon.$$

Now we turn to estimate $(5)$.  We only estimate a single term of the form, for $j\in\{1,...,J\}$,
$$\||x|^{-b} |v_n^{j}|^{p}\nabla e^{it\Delta^2}w_n^{J} \|_{L_t^2L_x^{\frac{2N}{N+2}}}.$$

By density, we may assume $v^{j}\in C_c^\infty(\R\times\R^N\setminus\{0\})$. Applying H\"older's inequality, the problem further reduces to showing that
\begin{align}\label{YU}
  \lim_{J\to J^*}\limsup_{n\to\infty}\|\nabla \tilde{w}_n^{J}\|_{L_{t,x}^2(K)}=0\ \text{for any compact $K\subset \R\times \R^N$},
\end{align}
where $\tilde{w}_n^{J}:=(\lambda_n^j)^{\frac{N-4}2}e^{i((\lambda_n^j)^4t-t_n^j)\Delta^2}w_n^{J}(\lambda_n^jx+x_n^j).$
and we have
$$\|\tilde{w}_n^{J}\|_{B(\R)}=\|e^{it\Delta^2}w_n^J\|_{B(\R)}.$$
This finally follows from an interpolation argument using  Lemma \ref{Local-smoothing} and vanishing \eqref{vanishing}(See \cite[Lemma 2.12]{KV} for details).
\end{proof}

We conclude this subsection by proving the decoupling of the kinetic energy.

\begin{proof}[The proof of Lemma \ref{KED}] The proof follows closely the argument of Lemma 3.2 in \cite{KV}; thus, we outline only the main steps. For any \( t_n \in I_n^m \), a direct computation yields
\[
 \|\Delta u_n^J(t)\|_{L^2}^2 - \sum_{j=1}^J \|\Delta v^j(t) \|_{L^2}^2 - \|\Delta w_n^J\|_{L^2}^2 = I_n + II_n,
\]
where
\[
I_n = \sum_{j \neq k} \langle v_n^j(t_n^j), v_n^k(t_n^k) \rangle_{\dot{H}^2}, \qquad \text{and} \qquad II_n = \sum_{j=1}^J 2 \Re \langle e^{it \Delta} w_n^J, v_n^j(t_n^j) \rangle_{\dot{H}^2}.
\]
It is therefore sufficient to prove that
\[
\lim_{n \to \infty} I_n = 0 \quad \text{for } j \neq k, \qquad \text{and} \qquad \lim_{n \to \infty} II_n = 0.
\]
To handle the term \( I_n \), we invoke the fact that for all \( J \geq 1 \) and \( 1 \leq j \leq J \), the sequence \( e^{it_n^j \Delta^2} \left[(g_n^j)^{-1} w_n^J \right] \) converges weakly to zero in \( \dot{H}^2 \) as \( n \to \infty \). The estimate for \( II_n \) proceeds analogously, relying on the orthogonality of parameters given in \eqref{orthogonality}.
\end{proof}


With the key analytical tools in place, most notably the Palais–Smale condition, we are now in a position to construct the critical solution to \eqref{IBNLS}. That is, if the main theorem fails, there exists a critical solution \( u_c \), with energy below that of the ground state \( W \), which does not scatter and whose rescaled orbit is precompact in \( \dot{H}^2 \). This solution represents the minimal obstruction to global scattering.

\begin{proposition}{(\bf Critical solution)}\label{CS}
Assume Theorem \ref{main-T} fails, then there exist a critical value $0 < K_c < \|W\|^2_{\dot{H}^2}$ and a forward maximal life-span solution $u_c: [0, T_{\max}) \times \R^N
\rightarrow \mathbb{C}$ to \eqref{IBNLS} with

$$
\sup_{t\in [0, T_{\max})}\|u_c(t)\|^2_{\dot{H}^2}=K_c<\|W\|^2_{\dot{H}^2}\quad \textnormal{and}\quad \|u_c\|_{B_0\left([0, T_{\max})\right)}=\infty.
$$
Moreover, there exists a frequency scale function $\lambda : [0, T_{\max}) \rightarrow (0, \infty)$ such that $$K=\{ \lambda(t)^{-\frac{N-4}{2} } u_c(t, \lambda(t)^{-1}x) : t \in [0, T_{\max})\}$$
is precompact in $\dot{H}^2$. An analogous result holds backward in time.

\end{proposition}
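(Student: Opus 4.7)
The plan is to follow the Kenig--Merle critical-element construction, using Proposition~\ref{PS} as the essential compactness ingredient. Since Theorem~\ref{main-T} is assumed to fail, the threshold $K_c$ introduced after Theorem~\ref{main-T} satisfies $0 < K_c < \|\Delta W\|_{L^2}^2$, and the definition of $L(K_c)$ produces a sequence of maximal-lifespan solutions $u_n : I_n \times \mathbb{R}^N \to \mathbb{C}$ of \eqref{IBNLS} with $\sup_{t \in I_n} \|\Delta u_n(t)\|_{L^2}^2 \to K_c$ and $\|u_n\|_{B(I_n)} \to \infty$. By continuity in $t$ of $\|u_n\|_{B(I_n \cap (-\infty, t])}$ and of the symmetric forward quantity, we can pick times $t_n \in I_n$ for which both one-sided scattering norms tend to $\infty$. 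After translating in time so that $t_n \equiv 0$, Proposition~\ref{PS} produces scales $\lambda_n \in (0, \infty)$ and a subsequence for which the rescaled data $\lambda_n^{-(N-4)/2} u_n(0, x/\lambda_n)$ converge strongly in $\dot{H}^2$ to some $\phi_c$.

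Let $u_c$ be the maximal-lifespan solution of \eqref{IBNLS} with $u_c(0) = \phi_c$, defined on $(-T^-_{\max}, T^+_{\max})$. Writing $\tilde u_n(t,x) := \lambda_n^{-(N-4)/2} u_n(\lambda_n^{-4}t, x/\lambda_n)$, scaling invariance shows that $\tilde u_n$ still solves \eqref{IBNLS}, with $\|\tilde u_n\|_{B} = \|u_n\|_{B}$ and initial data converging to $\phi_c$ in $\dot{H}^2$. Were $\|u_c\|_{B([0, T^+_{\max}))}$ finite, the long-time perturbation result (Proposition~\ref{stability}) would force $\|\tilde u_n\|_{B([0, \cdot))}$ to remain uniformly bounded for large $n$, contradicting our construction; hence $\|u_c\|_{B([0, T^+_{\max}))} = \infty$, and the symmetric argument yields $\|u_c\|_{B((-T^-_{\max}, 0])} = \infty$. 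The upper bound $\sup_t \|\Delta u_c(t)\|_{L^2}^2 \leq K_c$ follows from the strong convergence at $t=0$ together with persistence of the $\dot{H}^2$ control under the flow, while the minimality in the definition of $K_c$ combined with the non-scattering property just established gives the reverse inequality, hence $\sup_t \|\Delta u_c(t)\|_{L^2}^2 = K_c$.

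To obtain the precompactness of the rescaled orbit on $[0, T^+_{\max})$, take any sequence $\tau_n \in [0, T^+_{\max})$ and apply Proposition~\ref{PS} to the constant sequence $u_n \equiv u_c$ at times $t_n = \tau_n$, viewing $u_c$ on its full two-sided lifespan. The forward norm $\|u_c\|_{B([\tau_n, T^+_{\max}))}$ is infinite for every $n$, since $\|u_c\|_{B([0, T^+_{\max}))} = \infty$ while $\|u_c\|_{B([0, \tau_n])} < \infty$ by local well-posedness, and the backward norm is infinite because it dominates $\|u_c\|_{B((-T^-_{\max}, 0])} = \infty$. Proposition~\ref{PS} then provides scales $\mu_n$ for which $\mu_n^{-(N-4)/2} u_c(\tau_n, x/\mu_n)$ is precompact in $\dot{H}^2$, which yields precompactness of the full orbit modulo scaling. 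A standard measurable selection of $\lambda(t)$ through a frequency-concentration functional (for example, the unique scale at which a prescribed fraction of $\|\Delta u_c(t)\|_{L^2}^2$ sits in frequencies $\gtrsim \lambda(t)$) then defines the required $\lambda : [0, T^+_{\max}) \to (0, \infty)$; the backward-in-time statement follows by symmetry.

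The main obstacle will be the careful verification of the two-sided scattering-norm hypotheses of Proposition~\ref{PS} at each stage---both for the minimizing sequence used to construct $u_c$ and for the orbit $\{u_c(\tau_n)\}$---and the correct transfer of the non-scattering and kinetic-energy properties from $\tilde u_n$ to $u_c$ via Proposition~\ref{stability}. Once these ingredients are in place, the remainder reduces to direct applications of Propositions~\ref{PS} and~\ref{stability} together with standard Kenig--Merle-type selection for the modulation parameter.
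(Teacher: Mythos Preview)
Your proposal is correct and follows essentially the same Kenig--Merle route as the paper: extract a minimizing sequence from the definition of $K_c$, balance the two one-sided scattering norms, apply the Palais--Smale condition (Proposition~\ref{PS}) to obtain strong convergence of the rescaled data, use stability (Proposition~\ref{stability}) to transfer the non-scattering and kinetic-energy properties to the limiting solution $u_c$, and then invoke Proposition~\ref{PS} a second time along the orbit $\{u_c(\tau_n)\}$ to obtain precompactness modulo scaling. The only minor point is that your justification of $\sup_t\|\Delta u_c(t)\|_{L^2}^2\leq K_c$ via ``persistence under the flow'' is really the $L_t^\infty\dot H_x^2$ convergence on compact intervals coming from Proposition~\ref{stability}, which is exactly what the paper invokes.
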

\begin{proof}
The proof primarily relies on the Palais–Smale condition (for further details, see \cite{KM}, \cite{KV}). We outline the main steps. Suppose Theorem \ref{main-T} fails. By the definition of \( K_c \), there exists a sequence of solutions \( u_n : I_n \times \R^N \rightarrow \mathbb{C} \) to \eqref{IBNLS} such that
\begin{equation*}
\lim_{n\rightarrow\infty}\sup_{t\in I_n}\|u_n(t)\|^2_{\dot{H}^2}= K_c < \|W\|^2_{\dot{H}^2}\,,\qquad \lim_{n\rightarrow \infty} \|u_n\|_{B(t\geq t_n)} = \lim_{n\rightarrow \infty} \|u_n\|_{B(t\leq t_n)} = \infty.
\end{equation*}

By choosing \( t_n \in I_n \) such that \( \| u_n(t) \|_{B(t \geq t_n)} = \| u_n(t) \|_{B(t \leq t_n)} \) and using the time-translation invariance of the equation, we set \( t_n \equiv 0 \). Then, applying Proposition \ref{PS} and passing to a subsequence if necessary, there exists a sequence \( \{ \lambda_n \} \subset \mathbb{R}^+ \) such that
\[
\lambda_n^{-\frac{N-4}{2}} u_n(0, x / \lambda_n) \to u_0 \quad \text{in } \dot{H}^2.
\]

Let \( u_c : I_{\max} \times \mathbb{R}^N \to \mathbb{C} \) with \( u_c(0) = u_0 \) be the maximal-lifespan solution to \eqref{IBNLS}. By the stability theory, for any compact interval \( I \subset I_{\max} \), we have
\[
\begin{cases}
\lim_{n \rightarrow \infty}\| u_n - u_c \|_{L^\infty_t \dot{H}^2_x(I \times \mathbb{R}^N)} = 0, \\
\| u_c(t) \|_{B\left([0,\infty)\right)} = \| u_c(t) \|_{B\left((-\infty,0]\right)} = \infty, \\
\sup_{t \in I} \| u_c(t) \|_{\dot{H}^2} = K_c < \| W \|^2_{\dot{H}^2}.
\end{cases}
\]

Moreover, invoking Proposition \ref{PS} once more, we conclude that the trajectory \( K \) is precompact in \( \dot{H}^2 \).
\end{proof}



\section{Proof of main result: Theorem \ref{main-T}}

We argue by contradiction. Assume that Theorem 1.2 fails. Then, by Proposition \ref{CS}, there exists a minimal blow-up solution $
u_c : [0, T_{\max}) \times \mathbb{R}^N \to \mathbb{C},$ with the compactness property. Our goal is to rule out the existence of such a non-scattering critical solution. We divide the analysis into two cases according to the maximal time of existence.

\medskip
\noindent
\textbf{Claim 1.} There is no solution of the form given in Proposition 3.7 with \( T_{\max} < \infty \).

\smallskip
To prove this, we invoke the reduced Duhamel formula, which is a consequence of the compactness of \( u_c \) (see, for example, \cite[Proposition 5.23]{KM}):
\begin{lemma}[Reduced Duhamel formula]
\label{duhamel}
 For $t\in [0,T_{\max})$, the following holds as a weak limit in $\dot{H}^2$:
  $$u(t)=i\lim_{T\to T_{\max}}\int_{t}^{T}e^{i(t-s)\Delta^2}|x|^{-b}|u|^{p}u(s)ds.$$
\end{lemma}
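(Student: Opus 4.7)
The plan is to deduce the reduced Duhamel formula from the standard Duhamel representation by showing that the free evolution of $u(T)$ vanishes weakly in $\dot{H}^2$ as $T \to T_{\max}$. For $t \in [0, T_{\max})$ and $T \in (t, T_{\max})$ the standard Duhamel identity reads
\[
u(t) = e^{i(t-T)\Delta^2} u(T) + i \int_t^T e^{i(t-s)\Delta^2}\bigl(|x|^{-b}|u|^p u\bigr)(s)\, ds,
\]
so the lemma is equivalent to showing $e^{i(t-T)\Delta^2} u(T) \rightharpoonup 0$ in $\dot{H}^2$ as $T \to T_{\max}$.

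First I would fix an arbitrary sequence $T_n \to T_{\max}$ and a test function $\varphi \in C_c^\infty(\mathbb{R}^N)$. By the precompactness of the orbit $K$ from Proposition~\ref{CS}, I can pass to a subsequence along which $v_n := \lambda_n^{-(N-4)/2} u(T_n, \cdot/\lambda_n) \to v_* \in \dot{H}^2$ strongly, with $\lambda_n := \lambda(T_n)$. Writing $u(T_n) = g_{\lambda_n} v_n$ for the $\dot{H}^2$-isometric scaling $g_\lambda f(x) := \lambda^{(N-4)/2} f(\lambda x)$, and exploiting the intertwining $e^{is\Delta^2} g_\lambda = g_\lambda e^{is\lambda^4 \Delta^2}$, the pairing $\langle e^{i(t-T_n)\Delta^2} u(T_n), \varphi \rangle_{\dot{H}^2}$ reduces (modulo a correction that vanishes by the strong convergence of $v_n$) to
\[
\bigl\langle e^{i\sigma_n \Delta^2} v_*,\; g_{\lambda_n}^{-1}\varphi \bigr\rangle_{\dot{H}^2}, \qquad \sigma_n := (t - T_n)\lambda_n^4.
\]

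Next I would distinguish cases by the behavior of $\lambda_n$. If $\lambda_n \to \infty$---automatic when $T_{\max} < \infty$ by the standard finite-time blow-up of the scaling parameter (a consequence of compactness of $K$ and the local theory, cf.~\cite{KM}), and this is the case that drives Claim~1---then the Fourier substitution $\eta = \lambda_n \xi$ rewrites the pairing as
\[
\lambda_n^{-(2+N/2)} \int_{\mathbb{R}^N} |\eta|^4\, e^{i(t-T_n)|\eta|^4}\, \hat v_*(\eta/\lambda_n)\, \overline{\hat\varphi(\eta)}\, d\eta,
\]
where the integral stays uniformly bounded by dominated convergence (after approximating $v_*$ by a Schwartz function), so the prefactor forces the limit to vanish. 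For $T_{\max} = \infty$ with $\lambda_n$ bounded, one uses instead $|\sigma_n| \to \infty$ and the standard $\dot{H}^2$-dispersion $e^{i\sigma\Delta^2} v_* \rightharpoonup 0$ together with the convergence (along a subsequence) of $g_{\lambda_n}^{-1}\varphi$; the case $\lambda_n \to 0$ is treated symmetrically.

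The principal technical subtlety I anticipate is that the test function $g_{\lambda_n}^{-1}\varphi$ moves with $n$, so one cannot directly invoke weak convergence against a fixed element of $\dot{H}^2$. The explicit Fourier rescaling above is what resolves this difficulty, isolating the decay factor $\lambda_n^{-(2+N/2)}$ in the concentrating regime (respectively decoupling scaling from long-time dispersion in the bounded-scale regime) and reducing the desired weak convergence to a concrete dominated-convergence estimate.
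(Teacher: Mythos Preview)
Your argument is correct and is precisely the standard mechanism behind the reduced Duhamel formula. Note, however, that the paper does not actually \emph{prove} Lemma~\ref{duhamel}: it simply records the statement and refers the reader to \cite[Proposition~5.23]{KM}. So you are supplying a proof where the authors supply a citation; the approach you take (Duhamel $+$ compactness of the orbit $+$ case analysis on the scaling parameter) is exactly the argument that citation points to.

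Two small remarks on presentation. First, in the $\lambda_n\to 0$ branch your Fourier identity as written carries the prefactor $\lambda_n^{-(2+N/2)}\to\infty$, so ``symmetrically'' really means performing the dual substitution $\zeta=\xi/\lambda_n$ instead of $\eta=\lambda_n\xi$, which produces the prefactor $\lambda_n^{(N+4)/2}\to 0$ against a bounded integral (again after a Schwartz approximation of $v_*$). This is clearly what you intend, but it is worth making explicit. Second, the assertion that $\lambda_n\to\infty$ is automatic when $T_{\max}<\infty$ is itself a lemma (lower bound $\lambda(t)\gtrsim (T_{\max}-t)^{-1/4}$ from compactness $+$ local theory); since the paper uses Lemma~\ref{duhamel} only in Claim~1 where $T_{\max}<\infty$, this is the case that matters, and you are right to flag it as the standard input.
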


We suppose that $T_{\max}<\infty$. Using Strichartz, Hardy--Littlewood--Sobolev's inequality, Bernstein's inequality, and Hardy's inequality, we have
  \begin{align*}
    \|P_{N}u(t)\|_{L_x^2}&\lesssim \|P_N[|x|^{-b}|u|^{p}u]\|_{L_t^1L_x^2}\\
    &\lesssim N^{N(\frac{N+4}{2N}-\frac12)}|T_{\max}-t|\||x|^{-b}|u|^{p}u\|_{L_t^\infty L_x^{\frac{2N}{N+4}}}\\
    &\lesssim N^2|T_{\max}-t|\|\Delta u\|_{L_t^\infty L_x^2}^{p+1}.
  \end{align*}
  Thus, using Bernstein's inequality again, we deduce
  \begin{align*}
    \|u(t)\|_{L_x^2}&\leq\|P_Nu(t)\|_{L_x^2}+\|(1-P_N)u(t)\|_{L_x^2}\\
   &\lesssim  N|T_{\max}-t|+N^{-2}.
  \end{align*}
  Using mass conservation, we obtain \(\|u\|_{L^2} = 0\), and thus \(u \equiv 0\). However, since \(u \not\equiv 0\), this yields a contradiction, thereby ruling out the finite-time blow-up scenario.

\medskip
\noindent
\textbf{Claim 2.} There is no solution of the form given in Proposition \ref{CS} with \( T_{\max} = \infty \).

\smallskip
To exclude this case, we consider the localized virial/Morawetz-type functional. Let $a:\R^N\rightarrow\R$ be a smooth weight. Define
$$M_a(t)=-{\rm Im}\int_{\R^N}\bar{u}\nabla u\cdot\nabla adx.$$
A direct computation using equation~\eqref{IBNLS} and integration by parts yields (see \cite{GC2022})
\begin{align*}
 \frac{d}{dt}M_a(t)=&4\sum_{i,j,k}\int_{\R^N}\partial_{jk}u\partial_{ik}\bar{u}\partial_{ij}adx-
 \int_{\R^N}\Big(\frac{p}{p+2}\Delta a+\frac{2b}{p+2}\frac{x\cdot\nabla a}{|x|^2}\Big)|x|^{-b}|u|^{p+2}dx\\
 &-2\sum_{j,k}\int_{\R^N}\partial_{jk}\Delta a\partial_j\bar{u}\partial_kudx+\frac12\int_{\R^N}\Delta^3a|u|^2dx-\int_{\R^N}\Delta^2a|\nabla u|^2dx.
\end{align*}
where subscripts denote partial derivatives and repeated indices are summed.

The standard virial identity uses \( a(x) = |x|^2 \), but with \( \dot{H}^2 \) data, \( M_a(t) \) may be infinite. Hence, it's necessary to localize the weight, which is justified by the compactness of \( u_c(t) \). This leads to the following result (the proof is standard).

\begin{lemma}[Tightness]
  Let $\epsilon>0$ and $p\geq b$.  Then there exists $R=R(\epsilon)$ sufficiently large so that
  \begin{align*}
    \sup_{t\in[0,\infty)}\int_{|x|>R}|\Delta u(t,x)|^2+|x|^{-4}|u(t,x)|^2+|x|^{-b}|u|^{p+2}(t,x)dx<\epsilon.
  \end{align*}
\end{lemma}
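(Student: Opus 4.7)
The strategy is to translate the precompactness of the rescaled orbit into a uniform tail bound on $u_c(t)$ itself. Setting $v(t,y) := \lambda(t)^{-(N-4)/2} u_c(t, y/\lambda(t))$, the set $K = \{v(t) : t \in [0,\infty)\}$ is precompact in $\dot H^2$ by Proposition~\ref{CS}, so by the Riesz--Kolmogorov tightness criterion there exists $R_0 = R_0(\epsilon)$ such that
\begin{equation*}
\sup_{t \geq 0} \int_{|y| > R_0} |\Delta v(t,y)|^2 \, dy < \epsilon.
\end{equation*}
By Hardy's inequality $\||y|^{-2} f\|_{L^2} \lesssim \|\Delta f\|_{L^2}$ and the sharp Sobolev inequality of Proposition~\ref{SSI}, both applied to a smooth cutoff $\eta v$ with $\eta \equiv 1$ on $\{|y|>R_0\}$ and supported in $\{|y|>R_0/2\}$, the $\dot H^2$ tail bound upgrades to
\begin{equation*}
\sup_{t \geq 0} \int_{|y|>R_0} \bigl[\, |y|^{-4}|v(t)|^2 + |y|^{-b}|v(t)|^{p+2} \,\bigr]\, dy \lesssim \epsilon + \epsilon^{(p+2)/2}.
\end{equation*}
The hypothesis $p \geq b$ is needed here to make the sharp Sobolev inequality applicable with the weight $|y|^{-b}$.

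Next I would undo the rescaling. Each of the three integrands is invariant under the critical rescaling $f \mapsto \lambda^{(N-4)/2} f(\lambda\,\cdot\,)$, so the change of variables $x = y/\lambda(t)$ yields
\begin{equation*}
\int_{|x|>R} \bigl[\, |\Delta u_c|^2 + |x|^{-4}|u_c|^2 + |x|^{-b}|u_c|^{p+2} \,\bigr]\, dx = \int_{|y|>R\lambda(t)} \bigl[\, |\Delta v|^2 + |y|^{-4}|v|^2 + |y|^{-b}|v|^{p+2} \,\bigr]\, dy.
\end{equation*}
Provided $R\lambda(t) \geq R_0$ uniformly in $t$, the right-hand side is bounded by $\epsilon$, and taking $R := R_0/c_0$ with $c_0 := \inf_{t \geq 0}\lambda(t) > 0$ finishes the proof.

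The main obstacle is therefore establishing the uniform lower bound $\inf_{t \geq 0}\lambda(t) \geq c_0 > 0$ on the concentration scale. I would argue by contradiction in a manner parallel to Claim~1: if $\lambda(t_n) \to 0$ along a sequence $t_n \to \infty$, then after passing to a subsequence the precompactness of $K$ gives $v(t_n) \to v_\infty$ strongly in $\dot H^2$ with $v_\infty \not\equiv 0$, so $u_c(t_n)$ forms a bubble concentrating at the origin. Invoking the reduced Duhamel formula of Lemma~\ref{duhamel} at $t = t_n$ together with a Bernstein/Strichartz estimate on the nonlinear term (whose integrability near $x=0$ is precisely controlled by $p \geq b$) produces a quantitative contradiction with $v_\infty \neq 0$ and with the sub-$W$ threshold on $\|\Delta u_c(t)\|_{L^2}$. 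Once this lower bound is in place, the rest of the argument---transferring tightness through the critical rescaling and handling the three norms via Hardy and sharp Sobolev---is the routine computation above.
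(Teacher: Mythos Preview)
Your reduction to (i) tightness of the precompact orbit $K$ in $\dot H^2$, (ii) scale-invariance of the three densities, and (iii) the uniform lower bound $\inf_{t\geq 0}\lambda(t)>0$ is exactly the standard scheme the paper has in mind (it gives no proof, merely calling it ``standard''). Steps (i) and (ii) are correct as you describe them, including the use of Hardy and Proposition~\ref{SSI} to pass from the $\dot H^2$ tail to the other two integrands.

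The gap lies in your treatment of (iii). First, the geometric picture is inverted: with $u_c(t,x)=\lambda(t)^{(N-4)/2}v(t,\lambda(t)x)$, the regime $\lambda(t_n)\to 0$ means $u_c(t_n)$ \emph{spreads out} to spatial scale $1/\lambda(t_n)\to\infty$; it does not form a bubble concentrating at the origin. More importantly, the analogy with Claim~1 fails at the level of mechanism: the Duhamel/Bernstein bound $\|P_N u(t)\|_{L^2}\lesssim N^{2}|T_{\max}-t|$ used there rests entirely on the \emph{shortness} of the remaining time interval $[t,T_{\max})$, and this is precisely what is unavailable when $T_{\max}=\infty$. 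No Bernstein gain compensates for an infinite-length Duhamel integral, so the reduced Duhamel formula does not yield the contradiction you indicate. The standard argument for $\inf_{t\geq 0}\lambda(t)>0$ in the global case is genuinely different: one first observes that $\lambda(t_n)\to 0$ forces $u_c(t_n)\rightharpoonup 0$ weakly in $\dot H^2$ (the $\dot H^2$-mass escapes to frequency zero, equivalently to $|x|\to\infty$), and then combines this with the stability theory (Proposition~\ref{stability}) and the minimality of $K_c$ to contradict $\|u_c\|_{B([0,\infty))}=\infty$. The paper omits this step as well, but your sketch as written does not supply a valid substitute for it.
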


Returning to the proof of Claim $2$, let weight $a(x)$ denote by
\begin{align*}
a(x)=
  \begin{cases}
    |x|^2\ \ for\ \ |x|\leq R\\
    CR^2\ \ for\ \ |x|>2R
  \end{cases}
\end{align*}
for some $C>1$. In this intermediate region, we can impose
$$|D^j a|\lesssim R^{2-j}\ \ for\ \ R<|x|\leq 2R.$$
On the other hand, we have
  \begin{align}
    \frac{d}{dt}M_a(t)=&8\int_{\R^N}|\Delta u|^2-|x|^{-b}|u|^{p+2}dx\\
    &+\mathcal{O}\left(\int_{|x|>R}|x|^{-2}|\nabla u|^2+|x|^{-4}|u|^2+|x|^{-b}|u|^{p+2}dx\right),
  \end{align}
Lemma \ref{energy-tra} yields
$$E(u)\lesssim \frac{R^4}{\delta T}E(u)+\epsilon.$$
Choosing $T$ sufficiently large, we obtain $E(u)\lesssim \epsilon$. Then we deduce $E(u)\equiv0$, which implies $u\equiv0$, a contradiction.

Therefore, both scenarios \( T_{\max} < \infty \) and \( T_{\max} = \infty \) are excluded. This completes the proof of Theorem \ref{main-T}.

 \section*{Declarations}
\noindent {\bf Ethical Approval:} Not applicable.\\
{\bf Funding:} C.M.G. was partially supported by Conselho Nacional de Desenvolvimento Científico e Tecnologico - CNPq and Fundação de Amparo à Pesquisa do Estado do Rio de Janeiro - FAPERJ (Brazil). S. Keraani is supported in part by the Labex CEMPI (ANR-11-LABX-0007-01). C. Xu was partially supported by Natural Science Foundation of Qinghai
(No.2024-ZJ-976) and NSFC (No.12401296).\\
{\bf Authors' contributions:} The authors contributed equally to the preparation of this paper.\\
{\bf Availability of data and materials:} No new data or materials have been used in preparing this paper.

\bibliographystyle{abbrv}
\bibliography{bibpaper}

 \end{document}